\documentclass[11pt]{amsart}

\usepackage{amsfonts}
\usepackage{amssymb}
\usepackage{amsthm}
\usepackage{amsmath}
\usepackage{hyperref}
\usepackage{enumitem}
\usepackage{tikz}
\usepackage{pgf}
\usepackage{graphicx}
\usepackage[top=3.5cm, bottom=3.5cm, left=2.5cm, right=2.5cm]{geometry}
\usepackage[capitalise]{cleveref}

\newtheorem{prop}{Proposition}[section]
\newtheorem{theorem}[prop]{Theorem}
\newtheorem{lemma}[prop]{Lemma}
\newtheorem{corollary}[prop]{Corollary}
\newtheorem{conjecture}[prop]{Conjecture}

\theoremstyle{definition}
\newtheorem{definition}[prop]{Definition}

\theoremstyle{remark}

\newtheorem*{remark*}{Remark}
\newtheorem{remark}[prop]{Remark}

\theoremstyle{theorem}

\newcommand{\R}{\mathbb{R}}

\newcommand{\N}{\mathbb{N}}
\newcommand{\Z}{\mathbb{Z}}

\newcommand{\E}{\mathbb{E}}

\newcommand{\F}{\mathbb{F}}

\newcommand{\eps}{\varepsilon}
\newcommand{\pc}{\textup{dc}}
\newcommand{\dc}{\textup{dc}}
\newcommand{\crat}{\textup{cr}}

\newcommand{\supp}{\text{\textup{supp}}\,}

\newcommand*\diff{\mathop{}\!\mathrm{d}}

\numberwithin{equation}{section}

\title{Commuting probabilities of infinite groups}
\date{}
\author{Matthew C. H. Tointon}
\address{Pembroke College, University of Cambridge, CB2 1RF, United Kingdom}
\email{mcht2@cam.ac.uk}
\thanks{This work was supported by grant FN 200021\_163417/1 of the Swiss National Fund for scientific research.}

\begin{document}
\maketitle

\begin{abstract}Let $G$ be a group, and let $M=(\mu_n)_{n=1}^\infty$ be a sequence of finitely supported probability measures on $G$. Consider the probability that two elements chosen independently according to $\mu_n$ commute. Antol\'in, Martino and Ventura define the \emph{degree of commutativity} $\dc_M(G)$ of $G$ with respect to this sequence to be the $\limsup$ of this probability. The main results of the present paper give quantitative algebraic consequences of the degree of commutativity being above certain thresholds. For example, if $\mu_n$ is the distribution of the $n$th step of a symmetric random walk on $G$, or if $G$ is amenable and $(\mu_n)$ is a sequence of almost-invariant measures on $G$, we show that if $\dc_M(G)\ge\alpha>0$ then $G$ contains a normal subgroup $\Gamma$ of index at most $\lceil\alpha^{-1}\rceil$ and a normal subgroup $H$ of cardinality at most $(2\alpha^{-1})^{O(\alpha^{-2}\log\alpha^{-1})}$ such that $H\subset\Gamma$ and such that $\Gamma/H$ is abelian. This generalises a result for finite groups due to P. M. Neumann, and generalises and quantifies a result for certain residually finite groups of subexponential growth due to Antol\'in, Martino and Ventura. We also describe some general conditions on $(\mu_n)$ under which such theorems hold. We close with an application to \emph{conjugacy ratios} as introduced by Cox.
\end{abstract}

\tableofcontents

\section{Introduction}
The starting point for this paper is two simple but beautiful results concerning the probability that two randomly chosen elements of a finite group $G$ commute. If two elements $x,y\in G$ are chosen uniformly at random from $G$ and independently of one another, we define the probability that they commute to be the \emph{degree of commutativity} of $G$, and denote it by $\dc(G)$.
\begin{theorem}[Gustafson {\cite[\S1]{gustafson}}]\label{gustafson}Let $G$ be a finite group such that $\pc(G)>\frac{5}{8}$. Then $G$ is abelian.
\end{theorem}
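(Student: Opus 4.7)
The plan is to run the standard double-counting argument for the commuting pairs in $G$ and combine it with the elementary fact that $G/Z(G)$ is never a non-trivial cyclic group. I would first rewrite
\[
\dc(G)=\frac{|\{(x,y)\in G\times G:xy=yx\}|}{|G|^2}=\frac{1}{|G|^2}\sum_{x\in G}|C_G(x)|,
\]
and then stratify the sum over $x$ according to whether $x$ lies in $Z(G)$ or not. Central elements contribute $|C_G(x)|=|G|$, while for each non-central $x$ the centraliser $C_G(x)$ is a proper subgroup and hence satisfies $|C_G(x)|\le|G|/2$.

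Next, I would feed these two estimates into the formula to get
\[
\dc(G)\le\frac{1}{|G|^2}\Bigl(|Z(G)|\cdot|G|+(|G|-|Z(G)|)\cdot\tfrac{|G|}{2}\Bigr)=\frac{1}{2}+\frac{1}{2\,[G:Z(G)]}.
\]
At this point I would invoke the classical observation that if $G/Z(G)$ is cyclic then $G$ is abelian; consequently, if $G$ is non-abelian then $G/Z(G)$ is a non-cyclic group, and in particular $[G:Z(G)]\ge 4$. Substituting this bound yields $\dc(G)\le\tfrac12+\tfrac18=\tfrac58$, and contrapositively, $\dc(G)>\tfrac58$ forces $G$ to be abelian.

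There is no real obstacle here; the only slightly delicate point is the $G/Z(G)$ lemma, but it is entirely standard (if $G/Z(G)=\langle xZ(G)\rangle$ then every $g\in G$ is of the form $x^k z$ with $z$ central, and any two such elements commute). The proof also suggests a natural quantitative refinement: the inequality $\dc(G)\le\tfrac12+\tfrac{1}{2[G:Z(G)]}$ gives $[G:Z(G)]\le(2\dc(G)-1)^{-1}$ whenever $\dc(G)>\tfrac12$, which is precisely the shape of the $\dc_M(G)\ge\tfrac12+\eps$ statement from the abstract and presumably motivates the more general measure-theoretic versions developed later in the paper.
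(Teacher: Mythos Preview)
Your argument is correct and is exactly the classical Gustafson proof. The paper does not prove this statement separately (it is quoted as background from \cite{gustafson}), but when it proves the generalisations in \cref{main:measures} (1) and \cref{main:mean} (1) it follows the identical skeleton: write $\dc(G)$ as an average of $1/[G:C_G(x)]$, split according to whether $x\in Z(G)$, bound the non-central contribution by $\tfrac12$, obtain $\dc(G)\le\tfrac12+\tfrac{1}{2[G:Z(G)]}$, and then invoke the fact that $G/Z(G)$ non-cyclic forces $[G:Z(G)]\ge4$. Your closing remark about the quantitative refinement $[G:Z(G)]\le(2\dc(G)-1)^{-1}$ is also on target and is precisely how part (2) of those theorems is structured.
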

\begin{theorem}[P. M. Neumann {\cite[Theorem 1]{neumann}}]\label{neumann}
Let $G$ be a finite group such that $\pc(G)\ge\alpha>0$. Then $G$ has a normal subgroup $\Gamma$ of index at most $\alpha^{-1}+1$ and a normal subgroup $H$ of cardinality at most $\exp(O(\alpha^{-O(1)}))$ such that $H\subset\Gamma$ and $\Gamma/H$ is abelian.
\end{theorem}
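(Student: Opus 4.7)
The plan is to reduce to B.\,H. Neumann's BFC theorem---if every conjugacy class of a group $\Gamma$ has size at most $m$ then $|[\Gamma,\Gamma]|\leq\exp(O(m^{O(1)}))$---by finding a normal subgroup $\Gamma\triangleleft G$ of small index all of whose $\Gamma$-conjugacy classes are bounded. Specifically, a normal $\Gamma$ with $[G:\Gamma]\leq\alpha^{-1}+1$ and $\max_{x\in\Gamma}|x^\Gamma|\leq m=O(\alpha^{-O(1)})$ would give, upon setting $H:=[\Gamma,\Gamma]$, a subgroup characteristic in $\Gamma$ (hence normal in $G$), with $\Gamma/H$ abelian by definition and $|H|\leq\exp(O(\alpha^{-O(1)}))$ by BFC, which is exactly the theorem.

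The first step is averaging via the class equation $|G|^2\dc(G)=\sum_{x\in G}|C_G(x)|=k(G)|G|$, which rewrites $\dc(G)\geq\alpha$ as $k(G)\geq\alpha|G|$, where $k(G)$ denotes the number of conjugacy classes. Straightforward averaging produces some $x_0\in G$ with $[G:C_G(x_0)]\leq\alpha^{-1}$; writing $k(G)=\sum_{x\in G}|x^G|^{-1}$ and thresholding at $m=O(\alpha^{-1})$ shows that a positive proportion of $G$ lies in the conjugation-invariant set $S:=\{x\in G:|x^G|\leq m\}$, so $\Gamma:=\langle S\rangle$ is a non-trivial normal subgroup of $G$. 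A coset-counting refinement---based on the observation that cosets outside $\Gamma$ contain only elements of $G$-conjugacy class strictly larger than $m$, hence contribute little to $k(G)$---should yield the sharp index bound $[G:\Gamma]\leq\alpha^{-1}+1$.

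The main obstacle is passing from the generator bound $|x^G|\leq m$ for $x\in S$ to a uniform bound $|x^\Gamma|\leq m'$ for $x\in\Gamma$. Since arbitrary elements of $\Gamma$ are products of generators in $S$, and the naive bound $|(xy)^G|\leq|x^G|\cdot|y^G|$ (from $(xy)^G\subseteq x^G\cdot y^G$) grows exponentially in word length, one cannot hope to bound $|x^G|$ directly for an arbitrary $x\in\Gamma$. The expected resolution is to replace $\Gamma$ by a slightly smaller normal subgroup (e.g.\ an intersection of suitably chosen centralizers of small-class elements) that retains the bounded-index property but whose BFC structure is forced by construction; one then uses the identity $|x^\Gamma|\leq|x^G|$ for $x\in\Gamma$ (valid since $[\Gamma:C_\Gamma(x)]\leq[G:C_G(x)]$) to feed the resulting bound into BFC. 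Once this is achieved, BFC provides $|[\Gamma,\Gamma]|\leq\exp(O(\alpha^{-O(1)}))$, and the theorem follows.
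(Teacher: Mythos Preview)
Your overall framework matches the paper's (which reproduces Neumann's original argument): take $S=\{x:|x^G|\le m\}$ for a suitable threshold $m$, set $\Gamma=\langle S\rangle$ and $H=[\Gamma,\Gamma]$, and feed a uniform conjugacy-class bound on $\Gamma$ into the Neumann--Vaughan-Lee BFC theorem. The normality of $\Gamma$ and $H$, and the index bound, go through essentially as you say.

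The genuine gap is in your resolution of what you correctly flag as the main obstacle. You dismiss the submultiplicative bound $|(xy)^G|\le|x^G|\,|y^G|$ on the grounds that it ``grows exponentially in word length,'' and then propose instead to pass to some unspecified intersection of centralisers. But the point of Neumann's argument---and the step the paper isolates as \cref{neum.translates}---is precisely that the word length is \emph{bounded}. Since $|S|\ge(\alpha-1/m)|G|$ (this is your thresholding step, or the paper's \cref{large.centraliser}), a short pigeonhole/translate argument shows that $\langle S\rangle=S^{3\lceil 2/\alpha\rceil-1}$ once $m\ge2/\alpha$: if one could find $x_i\in S^{3i}\setminus S^{3i-1}$ for $i=1,\ldots,j$, then $S,x_1S,\ldots,x_jS$ would be disjoint, forcing $j<2/\alpha$. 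Hence every element of $\Gamma$ is a product of $O(1/\alpha)$ elements of $S$, and the ``naive'' submultiplicativity gives $|x^G|\le m^{O(1/\alpha)}=\exp(O(\alpha^{-1}\log\alpha^{-1}))$ for all $x\in\Gamma$, which is exactly what BFC needs.

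Your proposed alternative---replacing $\Gamma$ by an intersection of centralisers---is too vague to constitute a proof, and in the form the paper uses elsewhere (the proof of \cref{neumann.detect}) it produces bounds depending on the number of subgroups of a given index, which for a finite group would drag in $|G|$ rather than just $\alpha$. So the missing idea is the bounded-word-length lemma, after which the very inequality you discarded finishes the job.
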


There are many natural ways in which one might seek to generalise these results. In this paper we are concerned with generalisations to infinite groups. The first question in this setting is how to define the probability that two group elements commute.

Antol\'in, Martino and Ventura \cite{amv} approach this issue by considering sequences of finitely supported probability measures whose supports converge to the whole of $G$. Given a probability measure $\mu$ on $G$, define the \emph{degree of commutativity} $\dc_\mu(G)$ of $G$ with respect to $\mu$ via
\[
\dc_\mu(G)=(\mu\times\mu)(\{(x,y)\in G\times G:xy=yx\}).
\]
Then, given a sequence $M=(\mu_n)_{n=1}^\infty$ of probability measures on $G$, define the \emph{degree of commutativity} $\dc_M(G)$ of $G$ with respect to $M$ via
\[
\dc_M(G)=\limsup_{n\to\infty}\dc_{\mu_n}(G).
\]
In a preliminary version of \cite{amv} available on arxiv.org, Antol\'in, Martino and Ventura suggest the following rather natural analogue of \cref{neumann} for degree of commutativity defined in this way.
\begin{conjecture}[Antol\'in--Martino--Ventura]\label{conj:amv}
For any ``reasonable'' sequence $M=(\mu_n)_{n=1}^\infty$ of probability measures on $G$ we have $\dc_M(G)>0$ if and only if $G$ is virtually abelian.
\end{conjecture}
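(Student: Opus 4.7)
The plan is to handle the two directions of the conjecture separately. For the easy direction, suppose $G$ has an abelian subgroup $A$ of finite index. For any sequence $(\mu_n)$ one would want to call reasonable, the inequality $\liminf_n \mu_n(A) \ge 1/[G:A]$ should hold: for almost-invariant F\o lner measures this follows by distributing mass uniformly over the finitely many cosets of $A$, and for symmetric random walks by equidistribution on the finite coset graph. Since elements of $A$ pairwise commute, $\dc_{\mu_n}(G) \ge \mu_n(A)^2$, and therefore $\dc_M(G) \ge [G:A]^{-2} > 0$.

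For the hard direction, suppose $\dc_M(G) \ge \alpha > 0$. The first step is the Fubini decomposition
\[
\dc_{\mu_n}(G) \;=\; \int_G \mu_n(C_G(x))\, \diff\mu_n(x),
\]
to which Markov's inequality gives, along a subsequence, a set $E_n \subseteq G$ with $\mu_n(E_n) \ge \alpha/2$ such that $\mu_n(C_G(x)) \ge \alpha/2$ for every $x \in E_n$. The critical second step is to convert $\mu_n$-mass into subgroup index: for reasonable $(\mu_n)$ one hopes for an asymptotic bound of the form $\mu_n(H) \le c/[G:H] + o(1)$, valid for every finite-index subgroup $H$, which would force $[G:C_G(x)] \le K := 2c/\alpha$ for every $x \in E_n$ and every sufficiently large $n$. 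In other words, a $\mu_n$-positive-density set of ``small-centraliser'' elements is produced.

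The third step invokes B.\,H.\,Neumann's theorem on bounded-FC groups. Let $B_K := \{x \in G : [G:C_G(x)] \le K\}$, every element of which has conjugacy class of size at most $K$. Reasonableness of $(\mu_n)$ together with $\mu_n(B_K) \ge \mu_n(E_n) \ge \alpha/2$ should imply that $H := \langle B_K \rangle$ has finite index in $G$, since reasonable measures cannot concentrate on an infinite-index subgroup. Every element of $H$ has conjugacy class in $H$ of bounded size, so $H$ is a BFC group, and Neumann's theorem yields $|[H,H]| \le f(K)$. Standard post-Neumann arguments (passing to $C_H([H,H])$, which has finite index in $H$ by orbit--stabiliser applied to the conjugation action on the finite set $[H,H]$, and then to an abelian subgroup of finite index in $C_H([H,H])$) produce an abelian subgroup of finite index in $G$, i.e.\ $G$ is virtually abelian. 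Tracking the constants through all of this should yield the quantitative statements announced in the abstract.

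The main obstacle is Step~2: converting measure-theoretic largeness into an algebraic bound on subgroup index. The scare quotes around ``reasonable'' exist for exactly this reason: there is no obvious single axiom covering all natural classes simultaneously, and random-walk measures and F\o lner almost-invariant measures plausibly require genuinely different arguments (hitting-time estimates versus translation averaging). A secondary but related difficulty is ensuring in Step~3 that $\langle B_K \rangle$ has finite index rather than merely being infinite; this too depends on the chosen notion of reasonableness. Sharpening the constants in Steps 2 and 3, in particular tracking Gustafson's $5/8$-threshold and Neumann's quantitative bounds through the argument, is what is expected to yield the thresholds $5/8$, $1/2 + \eps$, and general $\alpha$ highlighted in the abstract.
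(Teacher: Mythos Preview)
The statement you are attempting is a \emph{conjecture}: the paper does not prove it in full generality, and indeed its main contribution is to establish it under specific hypotheses on $M$. So there is no proof in the paper to compare against directly. That said, your outline closely mirrors the strategy the paper uses for the cases it does settle, and your identification of Step~2 as the crux is exactly right. The paper's resolution of this obstacle is to \emph{define} ``reasonable'' as \emph{uniform detection of index} (\cref{def:detect}): roughly, $\mu_n(H)\le\pi(1/[G:H])+o(1)$ uniformly over all subgroups $H$ for some rate function $\pi$. With this axiom, \cref{neumann.detect} carries out essentially your Steps~1--3; the paper then verifies the axiom separately for almost-invariant measures (\cref{folner.uniform}) and for symmetric random walks (\cref{rw.uniform}), confirming your suspicion that these require different arguments.

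Two substantive gaps in your sketch deserve mention. First, in Step~2 you restrict attention to finite-index $H$, but the bound must also cover infinite-index subgroups, since otherwise large $\mu_n(C_G(x))$ does not rule out $[G:C_G(x)]=\infty$; the paper's definition handles both cases simultaneously. Second, and more seriously, in Step~3 you assert that every element of $H=\langle B_K\rangle$ has bounded conjugacy class, but this does not follow immediately: generators of $H$ lie in $B_K$, but an arbitrary element of $H$ is only a \emph{word} in $B_K$, and without a bound on word length you get only that $H$ is an FC-group, not a BFC-group. The paper fills this gap with \cref{neum.translates}, which shows (using invariance of the mean, or almost-invariance of the $\mu_n$) that $\langle B_K\rangle=B_K^m$ for some $m$ bounded in terms of $\alpha$; this is precisely what makes the conjugacy-class bound uniform over $H$ and allows the Neumann--Vaughan-Lee theorem to apply.

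A final difference of emphasis: your post-BFC step passes to $C_H([H,H])$ and then to an abelian subgroup, whereas the paper stops at the finite-by-abelian structure $\Gamma/[\Gamma,\Gamma]$. As noted in the remark after \cref{main:rw}, the latter is quantitatively sharper: the index of an abelian subgroup obtained your way depends on the rank of $G$ as well as on $\alpha$, while the paper's bounds on $[G:\Gamma]$ and $|[\Gamma,\Gamma]|$ depend on $\alpha$ alone.
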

They suggest that ``reasonable'' might mean that the measures $\mu_n$ cover $G$ with ``enough homogeneity'' as $n\to\infty$. Of course, the terms ``reasonable'' and ``enough homogeneity'' are somewhat vague, and so they also mention two explicit sequences of measures for which they believe \cref{conj:amv} should hold. The first such sequence is where $S$ is a fixed finite symmetric generating set for $G$, and $\mu_n$ is the uniform measure on the ball of radius $n$ about the identity in $G$ with respect to $S$. In this setting we write $\dc_S(G)$ instead of $\dc_M(G)$. The second such sequence is where $\mu$ is some finite probability measure on $G$, and $\mu_n=\mu^{\ast n}$ is defined by letting $\mu^{\ast n}(x)$ be the probability that a random walk of length $n$ on $G$ with respect to $\mu$ ends at $x$.

There has already been some progress towards \cref{conj:amv} in the former setting. Antol\'in, Martino and Ventura themselves prove the conjecture completely in that setting for hyperbolic groups \cite[Theorem 1.9]{amv}, whilst Valiunas proves it for right-angled Artin groups with certain generating sets \cite[Theorem 6]{valiunas}. Antol\'in, Martino and Ventura also prove the following result, which also includes a direct analogue of \cref{gustafson}.
\begin{theorem}[Antol\'in--Martino--Ventura {\cite[Theorem 1.5]{amv}}]\label{amv}
Let $G$ be a finitely generated residually finite group and let $S$ be a finite summetric generating set containing the identity and satisfying
\begin{equation}\label{eq:subexp.growth}
\frac{|S^{n+1}|}{|S^n|}\to1
\end{equation}
as $n\to\infty$. Then $\dc_S(G)>\frac{5}{8}$ if and only if $G$ is abelian, and $\dc_S(G)>0$ if and only if $G$ is virtually abelian.
\end{theorem}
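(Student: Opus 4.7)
The plan is to transfer each $\dc_S(G)$ lower bound into an ordinary commuting-probability bound on the finite quotients of $G$, then invoke \cref{gustafson} and \cref{neumann}, and finally reassemble the information on $G$ using residual finiteness.

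The key technical step is an equidistribution statement: for every finite-index normal subgroup $N\triangleleft G$ with projection $\pi\colon G\to G/N$, the pushforward $\pi_*\mu_n$ of the uniform measure on $S^n$ converges to the uniform measure on $G/N$. To prove this, I would note that \eqref{eq:subexp.growth} together with $sS^n\subset S^{n+1}$ yields $|sS^n\triangle S^n|/|S^n|\to 0$ for each $s\in S$, and then the standard subadditivity $|g_1g_2S^n\triangle S^n|\le|g_1S^n\triangle S^n|+|g_2S^n\triangle S^n|$ extends this to every $g\in G$. For any two cosets $g_iN$ and $g_jN$, left multiplication by $g_ig_j^{-1}$ bijects $S^n\cap g_jN$ into $g_iN$, so writing $a_{i,n}=|S^n\cap g_iN|/|S^n|$ gives $|a_{i,n}-a_{j,n}|\le |g_ig_j^{-1}S^n\triangle S^n|/|S^n|\to 0$, and combined with $\sum_i a_{i,n}=1$ this forces $a_{i,n}\to 1/[G:N]$. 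Since commuting is preserved by $\pi$, $\dc_{\mu_n}(G)\le\dc_{\pi_*\mu_n}(G/N)$, and taking $\limsup$ yields
\[
\dc_S(G)\le \dc(G/N) \qquad \text{for every finite-index normal } N\triangleleft G.
\]

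The ``$\tfrac{5}{8}$'' half now follows immediately: $\dc_S(G)>\tfrac{5}{8}$ forces every finite quotient of $G$ to be abelian by \cref{gustafson}, whence residual finiteness makes $G$ itself abelian. For the virtual-abelianness statement, suppose $\dc_S(G)\ge\alpha>0$. Applying \cref{neumann} in each $G/N$ supplies normal subgroups $\Gamma_N/N$ and $H_N/N$ with $[G:\Gamma_N]\le\alpha^{-1}+1$, $|H_N/N|\le C:=\exp(O(\alpha^{-O(1)}))$, and $\Gamma_N/H_N$ abelian. Because $G$ is finitely generated it has only finitely many subgroups of index $\le\alpha^{-1}+1$, so by a pigeonhole argument one may fix a single such subgroup $\Gamma\le G$ together with a separating family of $N$'s for which $\Gamma_N=\Gamma$ (if no single $\Gamma^{(i)}$ worked for a residually-finite subfamily, the finitely many resulting ``bad witness'' elements could all be avoided in some single $N$, contradiction). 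Then $[\Gamma,\Gamma]N/N\subset H_N/N$, so $[\Gamma,\Gamma]$ injects into a group of order $\le C$ modulo each such $N$; if $[\Gamma,\Gamma]$ were infinite then residual finiteness would separate more than $C$ of its elements in some quotient. Hence $[\Gamma,\Gamma]$ is finite, and the classical fact that a finitely generated group with finite commutator subgroup has finite-index centre (via the centralizer $C_\Gamma([\Gamma,\Gamma])$, whose commutator pairing factors through $\Hom$ into the finite group $[\Gamma,\Gamma]$) produces a finite-index abelian subgroup of $\Gamma$, hence of $G$.

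The converse is also immediate from the equidistribution step: if $A\le G$ is abelian of finite index $k$ then $|S^n\cap A|/|S^n|\to 1/k$, so pairs drawn from $S^n\cap A$ always commute and yield $\dc_{\mu_n}(G)\ge(|S^n\cap A|/|S^n|)^2\to 1/k^2>0$. I expect the main obstacle to be carrying out the equidistribution argument cleanly: one must bootstrap the Følner-type condition from $g\in S$ to arbitrary $g\in G$ and then to cosets of arbitrary finite-index normal subgroups. The pigeonhole step is conceptually simple but genuinely uses the finite generation of $G$, without which one could not simultaneously bound the number of candidate $\Gamma$'s and invoke residual finiteness.
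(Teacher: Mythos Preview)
Your argument is essentially correct and reconstructs what is, in spirit, the original Antol\'in--Martino--Ventura proof: push the ball measures forward to finite quotients, observe that the pushforwards equidistribute, bound $\dc_S(G)$ above by $\dc(G/N)$, and then invoke \cref{gustafson} and \cref{neumann} on the finite quotients together with residual finiteness. This is \emph{not} how the present paper proceeds. The paper does not prove \cref{amv} directly at all (it is quoted from \cite{amv}); instead it establishes the strictly stronger \cref{main:subexp}, with no residual-finiteness hypothesis, by an entirely different route. Namely, it shows that the ball measures \emph{measure index uniformly} (\cref{folner.uniform}), and then runs a version of P.~M.~Neumann's argument directly on the infinite group via an invariant mean (\cref{indep.cosets} together with the Neumann--Vaughan-Lee bound, \cref{bfc}), never passing through finite quotients. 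Your approach is more elementary and closer to the finite-group arguments, but genuinely needs residual finiteness; the paper's approach is more abstract but buys the removal of that hypothesis and the explicit quantitative bounds of \cref{main:subexp}.

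One small gap in your sketch is worth flagging. Having fixed $\Gamma$ and a separating family $\mathcal{F}$ of finite-index normal subgroups $N$ with $\Gamma_N=\Gamma$, you assert that if $[\Gamma,\Gamma]$ had more than $C$ elements then ``residual finiteness would separate more than $C$ of its elements in some quotient''. But the $N$ you need must lie in $\mathcal{F}$, and $\mathcal{F}$ is not a priori closed under finite intersections, so you cannot simply intersect the finitely many $N_{ij}$ separating the pairs. The standard fix is to arrange from the outset that you work along a \emph{descending chain} $N_1\supset N_2\supset\cdots$ with $\bigcap_m N_m=\{e\}$ (which exists since $G$ is finitely generated, hence has countably many finite-index normal subgroups); pigeonholing $\Gamma_{N_m}$ along this chain yields an infinite, automatically nested subfamily with trivial intersection, and then any finite set of distinct elements of $[\Gamma,\Gamma]$ is separated by a single sufficiently deep member of the chain. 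With this adjustment your argument goes through.
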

In the present paper we remove completely from \cref{amv} the hypothesis that $G$ is residually finite, and make the second conclusion more quantitative in the spirit of \cref{neumann}.
We also settle \cref{conj:amv} completely for the case in which the sequence of measures is defined via a symmetric random walk.
We do this by offering two possible precise versions of Antol\'in, Martino and Ventura's notion that \cref{conj:amv} should be satisfied by sequences of measures that cover $G$ with ``enough homogeneity'' in the limit, as follows.
\begin{definition}[uniform detection of index]\label{def:detect}
Let $\pi:(0,1]\to(0,1]$ be a non-decreasing function such that $\pi(\gamma)\to0$ as $\gamma\to0$. We say that a sequence $M=(\mu_n)_{n=1}^\infty$ of probability measures on a group $G$ \emph{detects index uniformly at rate $\pi$} if for every $\eps>0$ there exists $N=N(\eps)\in\N$ such that for every $m\in\N$ if $[G:H]\ge m$ then $\mu_n(H)\le\pi(\frac{1}{m})+\eps$ for every $n\ge N$. We also say simply that $M$ \emph{detects index uniformly} to mean that there exists some $\pi$ such that $M$ detects index uniformly at rate $\pi$.
\end{definition}
The word ``uniform'' in \cref{def:detect} refers to the requirement that the definition be satisfied by the same $N(\eps)$ for all subgroups $H$.
\begin{theorem}[weak Neumann-type theorem for measures that detect index uniformly]\label{neumann.detect}
Let $G$ be a group generated by at most $r$ elements, let $M=(\mu_n)_{n=1}^\infty$ be a sequence of measures on $G$ that detects index uniformly at rate $\pi$, and suppose that $\dc_M(G)\ge\alpha>0$. Then $G$ has an abelian subgroup of index at most $O_{r,\pi,\alpha}(1)$.
\end{theorem}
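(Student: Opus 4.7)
The plan is to extract from the hypothesis $\dc_M(G)\ge\alpha$ a positive-measure set of elements with small-index centraliser, and then use finite generation of $G$ to consolidate all these centralisers into a single finite-index subgroup $H$ whose centre has bounded index in $G$. By definition of $\limsup$, there are infinitely many $n$ with $\dc_{\mu_n}(G)>\tfrac{3\alpha}{4}$. Writing $\dc_{\mu_n}(G)=\int_G\mu_n(C_G(x))\diff\mu_n(x)$ and applying Markov's inequality, for each such $n$ the set $A_n:=\{x\in G:\mu_n(C_G(x))\ge\tfrac{\alpha}{2}\}$ has $\mu_n$-measure at least some $\delta=\delta(\alpha)>0$ (one may take $\delta=\alpha/4$).

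Next, choose $M_0=M_0(\pi,\alpha)\in\N$ large enough that $\pi(1/M_0)<\tfrac{\alpha}{4}$, and apply \cref{def:detect} with $\eps=\tfrac{\alpha}{4}$. Then for all sufficiently large $n$, every subgroup $K\le G$ with $[G:K]\ge M_0$ satisfies $\mu_n(K)<\tfrac{\alpha}{2}$; in particular, for each $x\in A_n$ we must have $[G:C_G(x)]<M_0$. Since $G$ is finitely generated of rank $r$, the standard action on coset spaces bounds the number of subgroups of $G$ of index less than $M_0$ by some $K_0=K_0(r,M_0)$. Label these subgroups $H_1,\ldots,H_k$ and set $H:=\bigcap_{i=1}^{k}H_i$; then $[G:H]\le M_0^{K_0}$, a quantity depending only on $r$, $\pi$ and $\alpha$. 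For every $x\in A_n$, the centraliser $C_G(x)$ coincides with some $H_i$ and therefore contains $H$; equivalently $x\in C_G(H)$, so $A_n\subseteq C_G(H)$. Hence $\mu_n(C_G(H))\ge\delta$ for infinitely many $n$, and a second application of uniform detection of index (now applied to the single subgroup $C_G(H)$) yields $[G:C_G(H)]\le m_1$ for some $m_1=m_1(\pi,\alpha)$.

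Finally, the subgroup $Z(H)=H\cap C_G(H)$ is abelian, and
\[
[G:Z(H)]\le[G:H]\cdot[G:C_G(H)]=O_{r,\pi,\alpha}(1),
\]
which is the desired conclusion. The step I expect to require the most care is the double use of the uniform-detection hypothesis: first on the infinite family of centralisers $\{C_G(x):x\in A_n\}$ and then on the fixed subgroup $C_G(H)$. This is legitimate because each invocation only requires $n$ to exceed a threshold that depends solely on $\pi$ and a chosen error parameter (not on the subgroup in question), so both invocations can be arranged on the same tail of $n$. Finite generation is essential in the middle step: without it the intersection of all subgroups of index less than $M_0$ could itself have infinite index, and the consolidation into a single $H$ would break down.
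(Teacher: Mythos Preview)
Your proof is correct and follows essentially the same strategy as the paper's: a Neumann/Markov-type argument to locate elements with bounded-index centraliser, uniform detection of index to turn ``large measure'' into ``bounded index'', \cref{fin.gen.fin.ind} to bound the number of such centralisers, and a final intersection to produce an abelian subgroup of bounded index. The only cosmetic differences are that the paper works with the fixed set $X=\{x:[G:C_G(x)]\le 1/\gamma\}$ and the subgroup $\Gamma=\langle X\rangle$ it generates (intersecting $\Gamma$ with $\bigcap_{x\in X}C_G(x)$), whereas you intersect \emph{all} subgroups of index below $M_0$ to form $H$ and then take $Z(H)=H\cap C_G(H)$; the two constructions yield comparable abelian subgroups and the bookkeeping is equivalent.
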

\begin{remark*}
In the converse direction, note that if $G$ has an abelian subgroup $H$ such that $\limsup\mu_n(H)\ge\gamma$ then $\dc_M(G)\ge\gamma^2$.
\end{remark*}
\begin{remark*}
Shalev \cite{shalev} and Martino, Valiunas, Ventura and the author \cite{mtvv} have subsequently extended \cref{neumann.detect} to finitely generated groups in which some higher-weight simple commutator vanishes with positive probability. Such groups have a nilpotent subgroup of bounded index.
\end{remark*}

\begin{definition}[uniform measurement of index]
We say that a sequence $M=(\mu_n)_{n=1}^\infty$ of probability measures on a group $G$ \emph{measures index uniformly} if $\mu_n(xH)\to1/[G:H]$ uniformly over all $x\in G$ and all subgroups $H$ of $G$.
\end{definition}
\begin{remark}\label{measures=>detects}If a sequence of probability measures on a group measures index uniformly then it also detects index uniformly with rate $\iota:(0,1]\to(0,1]$ defined by $\iota(x)=x$.
\end{remark}
\begin{theorem}\label{main:measures}
Let $G$ be a countable group, and let $M=(\mu_n)_{n=1}^\infty$ be a sequence of measures on $G$ that measures index uniformly. Then the following hold.
\begin{enumerate}
\item If $\dc_M(G)>\frac{5}{8}$ then $G$ is abelian.
\item If $\dc_M(G)\ge\frac{1}{2}+\eps$ for some $\eps>0$ then the centre of $G$ has index at most $\frac{1}{\eps}$ in $G$.
\item If $G$ is finitely generated and $\dc_M(G)\ge\alpha>0$ then $G$ has a normal subgroup $\Gamma$ of index at most $\lceil\alpha^{-1}\rceil$ and a normal subgroup $H$ of cardinality at most $(2\alpha^{-1})^{O(\alpha^{-2}\log\alpha^{-1})}$ such that $H\subset\Gamma$ and $\Gamma/H$ is abelian.
\end{enumerate}
\end{theorem}
\begin{remark}\label{rem:fba=>va}
A finitely generated finite-by-abelian group $G$ is virtually abelian, so Theorem \ref{main:measures} implies in particular the conclusion required by \cref{conj:amv}. However, the bound on the index of the abelian subgroup of $G$ obtained in this way depends on the rank of $G$ as well as on $\alpha$, whereas the bound given by Theorem \ref{main:measures} depends only on $\alpha$. The characterisation given by Theorem \ref{main:measures} is therefore quantitatively preferable.

This is slightly reminiscent of Gromov's theorem on groups of polynomial growth. Classically, Gromov's theorem says that a group of polynomial growth is virtually nilpotent. However, Breuillard, Green and Tao \cite[Corollary 11.5]{bgt} have given a quantitative refinement of Gromov's theorem showing that a group $G$ of polynomial growth has a finite-index finite-by-nilpotent subgroup. This implies in particular that $G$ is virtually nilpotent, but this version of the statement offers far less quantitative control. A similar phenomenon occurs in Breuillard and the author's analogous result for finite groups of large diameter \cite[Theorem 4.1]{bt}.
\end{remark}

The following result shows that Theorem \ref{main:measures} applies in particular to a sequence of measures coming from a random walk.
\begin{theorem}[random walks measure index uniformly]\label{rw.uniform}Let $G$ be a finitely generated group, and let $\mu$ be a symmetric, finitely supported generating probability measure on $G$ such that $\mu(e)>0$. Then the sequence $M_\mu=(\mu^{\ast n})_{n=1}^\infty$ measures index uniformly.
\end{theorem}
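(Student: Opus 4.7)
The plan is to combine the symmetry and reversibility of the random walk on coset spaces with a quantitative smoothness of $\mu^{*n}$ provided by the laziness $\mu(e)>0$, exploiting a BFS argument in the Schreier coset graph made possible by finite generation.

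First I would use the symmetry $\mu=\check\mu$ to write $\mu^{*n}(xH)=\mu^{*n}(Hx^{-1})$ and view $Y_n=HX_n$ as a Markov chain on the right coset space $H\backslash G$, reversible with respect to the uniform counting measure. A Cauchy--Schwarz argument on the reversible chain, together with transitivity of the $G$-action, gives for even $n$ that $\mu^{*n}(yH)\leq\mu^{*n}(H)$ for every $y\in G$, and that $\mu^{*2n}(H)$ is non-increasing in $n$. It therefore suffices to bound $\mu^{*n}(H)$ uniformly in $H$; the matching lower bound for finite-index $H$ then follows by summing the uniform upper bound over the $[G:H]$ cosets.

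I would then split into two regimes. For $[G:H]\leq M$: a finitely generated group has only finitely many subgroups of each bounded index, and for each such $H$ the induced chain on the finite state space $H\backslash G$ is aperiodic (thanks to $\mu(e)>0$), irreducible and reversible, converging exponentially to uniform; this yields uniform convergence over this finite collection. For $[G:H]\geq M$: a BFS in the Schreier coset graph produces at least $M$ distinct cosets $g_1H,\ldots,g_MH$ with representatives $g_i$ in the ball $B_{M-1}$ of radius $M-1$ with respect to $\supp\mu$. Writing $a_{yH}=\mu^{*(M-1)}(yH)$ and $b_{yH}=\mu^{*n}(yH)$, the convolution identity
\[
\mu^{*(n+M-1)}(H) = \sum_{yH\in G/H} a_{yH}\, b_{yH},
\]
combined with $b_{yH}\leq b_H$ (reversibility, for even $n$), $\sum_{yH}b_{yH}\leq 1$, $|S_{M-1}(H)|\geq M$, and $a_{g_iH}\geq \delta:=\mu_{\min}^{M-1}>0$ where $\mu_{\min}=\min_{s\in\supp\mu}\mu(s)$, yields after rearrangement
\[
\mu^{*(n+M-1)}(H) \leq b_H(1-\delta M) + \delta.
\]
The laziness assumption, via the binomial decomposition $\mu=\mu(e)\delta_e+(1-\mu(e))\nu$, gives the smoothness estimate $\|\mu^{*(n+k)}-\mu^{*n}\|_1 = O(k/\sqrt n)$ for $k\leq n$, so $\mu^{*(n+M-1)}(H)\geq b_H - O(M/\sqrt n)$. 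Combining the two inequalities and rearranging gives, uniformly over $H$ with $[G:H]\geq M$,
\[
\mu^{*n}(H)\leq \frac{1}{M} + \frac{C}{\delta\sqrt n}.
\]
Given $\eps>0$, choosing $M=\lceil 2/\eps\rceil$ and then $n$ sufficiently large yields $\mu^{*n}(H)\leq\eps$ uniformly.

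The uniform bound on $\mu^{*n}(H)$ transfers to $\sup_x \mu^{*n}(xH)$ via reversibility for even $n$ and via the smoothness estimate for odd $n$. The main obstacle is the large-index step: what makes it work uniformly \emph{without} requiring $\|g\cdot\mu^{*n}-\mu^{*n}\|_1\to 0$ (which fails when $G$ is non-amenable) is that the convolution identity averages $\mu^{*n}$ against $\mu^{*(M-1)}$, and the BFS lemma guarantees enough cosets on which the reversibility bound $b_{yH}\leq b_H$ picks up enough slack for the argument to close.
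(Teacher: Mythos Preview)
Your approach is genuinely different from the paper's, which simply invokes the Morris--Peres evolving-sets bound: the induced chain on $G/H$ is symmetric (hence reversible with respect to counting measure), and since every nonzero transition probability is at least $c=\min_{s\in\supp\mu}\mu(s)$, the conductance satisfies $\Phi(r)\ge c\pi_*/r$; plugging this into the Morris--Peres integral gives a mixing/heat-kernel bound depending only on $c$ and $\eps$, hence uniform over $H$. Your route---split by index, handle small index by finiteness of subgroups of bounded index, and handle large index by a convolution-plus-BFS argument exploiting laziness---is more elementary and avoids citing a black box, which is attractive.

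There is, however, a real gap. Your claim that ``transitivity of the $G$-action'' together with Cauchy--Schwarz yields $\mu^{*n}(yH)\le\mu^{*n}(H)$ for even $n$ is not justified: the right $G$-action on $H\backslash G$ (or the left action on $G/H$) does \emph{not} commute with the transition kernel unless $\mu$ is conjugation-invariant, so the chain is not vertex-transitive. Concretely, the diagonal return probability is $P^{2m}(xH,xH)=\mu^{*2m}(xHx^{-1})$, which for non-normal $H$ can differ from $\mu^{*2m}(H)$; e.g.\ in $S_3$ with $H=\langle(12)\rangle$ and $\mu=\eps\delta_e+\eps\delta_{(12)}+(1-2\eps)\delta_{(13)}$ one has $\mu^{*2}(\langle(13)\rangle)>\mu^{*2}(H)$ for small $\eps$. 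Cauchy--Schwarz thus only gives $\mu^{*2m}(yH)\le\sqrt{\mu^{*2m}(H)\,\mu^{*2m}(yHy^{-1})}$, which is not the inequality you use.

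The argument is salvageable: set $B_n=\sup\{\mu^{*n}(K):[G:K]\ge M\}$ and note that positive semidefiniteness of $P^{2m}$ gives $\mu^{*2m}(yH)\le\tfrac12\bigl(\mu^{*2m}(H)+\mu^{*2m}(yHy^{-1})\bigr)\le B_{2m}$, since $yHy^{-1}$ has the same index as $H$. Your rearrangement then yields $\mu^{*(n+M-1)}(H)\le B_n(1-\delta M)+\delta$ uniformly over such $H$; combined with the laziness smoothness estimate and taking the supremum over $H$ gives $B_n\le 1/M+O_\delta(1/\sqrt n)$. With this modification the large-index step closes, and the small-index step is unaffected.
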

\begin{remark*}
The well-known example of $G=\Z$, with $\mu$ the uniform probability measure supported on $\pm1$ and $H=2\Z$, shows that \cref{rw.uniform} is not necessarily true in the absence of the condition $\mu(e)>0$ (or some other aperiodicity assumption).
\end{remark*}

Theorem \ref{main:measures} also implies a significant generalisation of Theorem \ref{amv}. First, note that condition \eqref{eq:subexp.growth} is already somewhat suggestive of a particular explicit generalisation, as it implies that the sequence $(S^n)_{n=1}^\infty$ is a \emph{F\o lner sequence} for the group $G$. In general a sequence $(F_n)_{n=1}^\infty$ of finite subsets of $G$ is said to be a \emph{left-F\o lner sequence}, or simply a \emph{F\o lner sequence}, if
\[
\frac{|xF_n\triangle F_n|}{|F_n|}\to0
\]
for every $x\in G$. Even more generally, for a given F\o lner sequence $(F_n)_{n=1}^\infty$, if $\mu_n$ is the uniform probability measure on the set $F_n$ then the sequence $(\mu_n)_{n=1}^\infty$ satisfies the condition
\begin{equation}\label{eq:almost.inv}
\|x\cdot\mu_n-\mu_n\|_1\to0
\end{equation}
for every $x\in G$ (here $x\cdot\mu$ is defined by setting $x\cdot\mu(A)=\mu(x^{-1}A)$). A sequence $(\mu_n)_{n=1}^\infty$ of probability measures satisfying \eqref{eq:almost.inv} is said to be a sequence of \emph{almost-invariant probability measures}. The following result therefore combines with Theorem \ref{main:measures} and Remark \ref{rem:fba=>va} to recover in particular Theorem \ref{amv} without the assumption of residual finiteness.

\begin{theorem}[almost-invariant measures measure index uniformly]\label{folner.uniform}Let $G$ be a finitely generated group. Then every sequence of almost-invariant measures on $G$ measures index uniformly.
\end{theorem}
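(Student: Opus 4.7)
The plan is to pass to the induced measures on coset spaces and exploit almost-invariance there. Fix a finite symmetric generating set $S$ for $G$ and set $\delta_n=\max_{s\in S}\|s\cdot\mu_n-\mu_n\|_1$; by almost-invariance $\delta_n\to 0$. For any subgroup $H\le G$, the $\mu_n$ descend to probability measures $\bar\mu_n$ on the left coset space $G/H$ via $\bar\mu_n(C)=\mu_n(C)$, and the fact that conditional expectation is an $L^1$-contraction yields $\|s\cdot\bar\mu_n-\bar\mu_n\|_1\le\delta_n$ for each $s\in S$. Consequently $|\bar\mu_n(C)-\bar\mu_n(C')|\le d(C,C')\cdot\delta_n$ whenever $C,C'$ are at distance $d(C,C')$ in the Schreier graph of $G/H$ with respect to $S$.

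The next step is an elementary ball-growth lemma: in the Schreier graph of $G/H$, any ball $B_r(C_0)$ satisfies $|B_r(C_0)|\ge\min(r+1,[G:H])$, which follows by induction on $r$ from the fact that the Schreier graph is connected (since $S$ generates $G$ and $G$ acts transitively on $G/H$), so whenever $B_r\subsetneq G/H$ the next ball must acquire at least one new vertex. Given $\eps>0$, choose $k$ with $1/k<\eps/2$. For any subgroup with $[G:H]\ge k$ the lemma produces $k$ distinct cosets $C_1=xH,C_2,\ldots,C_k$ within distance $k-1$ of $xH$; the Lipschitz bound gives $\bar\mu_n(C_i)\ge\mu_n(xH)-(k-1)\delta_n$, and summing yields
\[
\mu_n(xH)\le\frac{1}{k}+(k-1)\delta_n,
\]
which combined with $1/[G:H]\le 1/k$ bounds $|\mu_n(xH)-1/[G:H]|$ by $\eps/2+(k-1)\delta_n$ uniformly in $x$ and $H$. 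For the remaining subgroups, those with $[G:H]<k$, M.~Hall's theorem supplies finiteness: there are only finitely many such $H$, each with Schreier graph of diameter at most $[G:H]-1\le k-1$, and the same Lipschitz bound together with $\sum_{C}\bar\mu_n(C)=1$ forces $|\bar\mu_n(xH)-1/[G:H]|\le(k-1)\delta_n$ uniformly in $x$. Choosing $N$ with $(k-1)\delta_n<\eps/2$ for all $n\ge N$ then handles both cases simultaneously.

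The anticipated obstacle is getting uniformity over the infinitely many possible subgroups $H$. Finite generation supplies the needed finiteness in the low-index regime through M.~Hall's theorem, but the high-index regime has no such finiteness available; the ball-growth bound $|B_r(C_0)|\ge\min(r+1,[G:H])$, which depends only on $k$ and not on $H$, is the crucial ingredient that allows all high-index subgroups to be controlled simultaneously and bypasses this difficulty.
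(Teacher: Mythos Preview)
Your argument is correct and follows essentially the same route as the paper's proof: both pass from almost-invariance with respect to generators to a uniform Lipschitz-type estimate on coset measures, and both invoke the same connectivity fact about Schreier/coset graphs (your ball-growth lemma is exactly the paper's \cref{reps.in.bdd.ball}, rephrased). One small remark: your appeal to M.~Hall's theorem in the low-index case is superfluous, since the bound $|\bar\mu_n(xH)-1/[G:H]|\le(k-1)\delta_n$ you derive from the diameter estimate and $\sum_C\bar\mu_n(C)=1$ is already uniform over all $H$ with $[G:H]<k$, regardless of how many such subgroups there are.
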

\begin{remark*}
By contrast with \cref{folner.uniform}, if $\mu_n$ is the uniform measure on the ball of radius $n$ in a group of exponential growth then the sequence $(\mu_n)_{n=1}^\infty$ may not even \emph{detect} index uniformly. For example, writing $F(x,y)$ for the free group on generators $x,y$, and taking $G=\Z\times F(x,y)$ with generating set $\{(0,e),(\pm1,e),(0,x^{\pm1}),(0,y^{\pm1})\}$, the subgroup $F(x,y)$ has infinite index but $\mu_n(F(x,y))\not\to0$.
\end{remark*}

It is well known and easy to check that if $G$ is a group of subexponential growth and $S$ is an arbitrary finite symmetric generating set for $G$ containing the identity then there is a sequence $n_1<n_2<\ldots$ such that $(S^{n_i})_{i=1}^\infty$ is a F\o lner sequence for $G$. Theorems \ref{main:measures} and \ref{folner.uniform} therefore also have the following immediate corollary.
\begin{corollary}
Let $G$ be a finitely generated group of subexponential growth with finite symmetric generating set $S$ containing the identity, and for each $n$ let $\mu_n$ be the uniform probability measure on $S^n$. Then the following hold.
\begin{enumerate}
\item If $\liminf\dc_{\mu_n}(G)>\frac{5}{8}$ then $G$ is abelian.
\item If $\liminf\dc_{\mu_n}(G)\ge\frac{1}{2}+\eps$ for some $\eps>0$ then the centre of $G$ has index at most $\frac{1}{\eps}$ in $G$.
\item If $\liminf\dc_{\mu_n}(G)\ge\alpha>0$ then $G$ has a normal subgroup $\Gamma$ of index at most $\lceil\alpha^{-1}\rceil$ and a normal subgroup $H$ of cardinality at most $(2\alpha^{-1})^{O(\alpha^{-2}\log\alpha^{-1})}$ such that $H\subset\Gamma$ and $\Gamma/H$ is abelian.
\end{enumerate}
\end{corollary}
Of course, for \cref{conj:amv} to hold in this case we would need to replace the $\liminf$s with $\limsup$s.

Let us emphasise that there are many groups that do not admit generating sets satisfying condition \eqref{eq:subexp.growth}, but that nonetheless admit sequences of almost-invariant measures. Indeed, it is well known that a countable discrete group admits such a sequence if and only if it belongs to the class of \emph{amenable} groups, which includes all soluble groups, for example, whereas a generating set can satisfy \eqref{eq:subexp.growth} only in a group of subexponential growth.

Unfortunately we do not resolve \cref{conj:amv} for the seqence of uniform probability measures on balls with respect to a fixed generating set that does not satisfy \eqref{eq:subexp.growth}. As is noted in \cite{amv}, in order to prove the conjecture in this setting it would be sufficient to prove that if a finite generating set $S$ for a group $G$ did not satisfy \eqref{eq:subexp.growth} then $\dc_S(G)=0$.

\bigskip
\noindent\textsc{Degree of commutativity with respect to an invariant mean.} We now present a variant of \cref{main:measures} for amenable groups, which is both a natural result in its own right and, as it turns out, an ingredient in the proof of \cref{main:measures} (see \cref{rem:mean} for more details).

By definition, a group is amenable if it admits a \emph{finitely additive left-invariant mean}. A \emph{finitely additive mean} on a group $G$ is a linear functional $\int\diff\mu:\ell^\infty(G)\to\R$ that is positive in the sense that $\int f\diff\mu\ge0$ if $f\ge0$ pointwise, and normalised in the sense that $\int 1_G\diff\mu=1$. Such a mean is said to be \emph{left-invariant} if, defining $g\cdot f$ via $g\cdot f(x)=f(g^{-1}x)$ for $g\in G$, we have $\int g\cdot f\diff\mu=\int f\diff\mu$ for every $f\in\ell^\infty(G)$ and every $g\in G$. In general we abuse notation slightly and denote the mean $\int\diff\mu$ simply by $\mu$. Given a subset $X\subset G$ and a function $f\in\ell^\infty(G)$, we write $1_X$ for the characteristic function of $X$, write
\[
\int_{x\in X}f(x)\diff\mu(x)=\int1_Xf\diff\mu,
\]
and abbreviate
\[
\mu(X)=\int1_X\diff\mu.
\]

It is very natural to define the degree of commutativity on an amenable group $G$ via a finitely additive left-invariant mean $\mu$. Given such a mean on $G$, we define the \emph{product mean} $\mu\times\mu$ on $G\times G$ via
\[
\int_{G\times G}f\diff(\mu\times\mu)=\int_{x\in G}\int_{y\in G}f(x,y)\diff\mu(x)\diff\mu(y).
\]
We then define
\[
\pc_\mu(G)=(\mu\times\mu)(\{(x,y)\in G\times G:xy=yx\}).
\]

\begin{theorem}\label{main:mean}
Let $G$ be an amenable group with finitely additve left-invariant mean $\mu$. Then the following hold.
\begin{enumerate}
\item If $\dc_\mu(G)>\frac{5}{8}$ then $G$ is abelian.
\item If $\dc_\mu(G)\ge\frac{1}{2}+\eps$ for some $\eps>0$ then the centre of $G$ has index at most $\frac{1}{\eps}$ in $G$.
\item If $\dc_\mu(G)\ge\alpha>0$ then $G$ has a normal subgroup $\Gamma$ of index at most $\lceil\alpha^{-1}\rceil$ and a normal subgroup $H$ of cardinality at most $(2\alpha^{-1})^{O(\alpha^{-2}\log\alpha^{-1})}$ such that $H\subset\Gamma$ and $\Gamma/H$ is abelian.
\end{enumerate}
\end{theorem}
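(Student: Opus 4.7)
The plan is to base the whole argument on the identity
\[
\dc_\mu(G)=\int_G \mu(C_G(x))\,d\mu(x),
\]
obtained by a Fubini-type calculation for the product mean $\mu\times\mu$ applied to $\{(x,y):xy=yx\}=\bigsqcup_{x}\{x\}\times C_G(x)$, together with the consequence of left-invariance that $\mu(H)=1/[G:H]$ for every finite-index subgroup $H\le G$ and $\mu(H)=0$ otherwise (the left cosets of $H$ partition $G$ into equal-$\mu$-mass pieces, and finite additivity forces the values).

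For parts (1) and (2) I would write $Z=Z(G)$ and observe that the integrand equals $1$ on $Z$ and is at most $1/2$ on $G\setminus Z$, since the centraliser of a non-central element is a proper subgroup; this yields the single estimate $\dc_\mu(G)\le\tfrac12+\tfrac{\mu(Z)}{2}$. For (1), $\dc_\mu(G)>5/8$ forces $\mu(Z)>1/4$, so $[G:Z]\le 3$, and since a group with cyclic central quotient is itself abelian we conclude $G=Z$. For (2), $\dc_\mu(G)\ge\tfrac12+\eps$ forces $\mu(Z)\ge 2\eps$, whence $[G:Z]\le 1/(2\eps)\le 1/\eps$.

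For part (3), set $f(x):=\mu(C_G(x))\in\{0\}\cup\{1/n:n\ge 1\}$; $f$ vanishes outside the FC-centre $F:=\{x:[G:C_G(x)]<\infty\}$, so $\mu(F)\ge\int f\,d\mu\ge\alpha$, which (as $[G:F]$ is a positive integer) gives $[G:F]\le\lceil\alpha^{-1}\rceil$. Since $F$ is characteristic in $G$, taking $\Gamma:=F$ supplies the outer normal subgroup of the correct index. It then remains to produce a finite normal $H\subseteq\Gamma$ of size $\le\exp(O(\alpha^{-O(1)}))$ with $\Gamma/H$ abelian, equivalently to bound $|[\Gamma,\Gamma]|$ and take $H:=[\Gamma,\Gamma]$ (characteristic in $\Gamma$, hence normal in $G$). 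This is the main obstacle. The FC property gives that $[\Gamma,\Gamma]$ is already locally finite and torsion by B.~H.~Neumann's theorem, but a global finiteness bound requires the BFC hypothesis of the quantitative Neumann--Wiegold theorem, which yields $|[\Gamma,\Gamma]|\le N^{O(\log N)}$ when every $\Gamma$-conjugacy class has size at most $N$.

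To establish such a BFC bound on $\Gamma$, I would restrict $\mu$: the normalised mean $\mu_\Gamma:=\mu|_\Gamma/\mu(\Gamma)$ is left-invariant on $\Gamma$, and the identity
\[
\mu(C_G(x))=[C_G(x)\Gamma:\Gamma]\cdot\mu(\Gamma)\cdot\mu_\Gamma(C_\Gamma(x)),
\]
combined with $[C_G(x)\Gamma:\Gamma]\le[G:\Gamma]=1/\mu(\Gamma)$, yields $\dc_{\mu_\Gamma}(\Gamma)\ge\dc_\mu(G)/\mu(\Gamma)\ge\alpha$. Markov's inequality then produces a conjugation-invariant subset $B\subseteq\Gamma$ with $\mu_\Gamma(B)\ge\alpha/2$ consisting of elements satisfying $|x^\Gamma|\le\lceil 2/\alpha\rceil$. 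The hard part will be to promote this local BFC bound on $B$ to a global one on all of $\Gamma$; I expect to do this by applying Dietzmann's lemma to the finite torsion sets $\{[y,x]:y\in\Gamma\}=(x^\Gamma)^{-1}x$ (of size at most $\lceil 2/\alpha\rceil$) for $x\in B$, and then using the invariant mean in a covering argument to reduce to boundedly many conjugacy classes, concluding that $\Gamma$ is BFC with class bound polynomial in $\alpha^{-1}$. Wiegold's theorem then delivers $|[\Gamma,\Gamma]|\le\exp(O(\alpha^{-O(1)}))$ and completes the proof.
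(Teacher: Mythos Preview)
Your treatment of parts (1) and (2) is correct and in fact slightly sharper than the paper's: from the single estimate $\dc_\mu(G)\le\tfrac12+\tfrac12\mu(Z)$ you obtain $[G:Z]\le 1/(2\eps)$ in (2), and in (1) you avoid the paper's detour through \cref{v.central.implies.indep} by arguing directly that $[G:Z]\le 3$ forces $G/Z$ cyclic, hence $G$ abelian. The paper instead first proves (2) via \cref{large.centraliser} (obtaining only $\mu(Z)\ge\eps$), then uses the resulting finite index of $Z$ to reduce (1) to essentially the same inequality you write down.

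Part (3), however, has a genuine gap: the choice $\Gamma=F$ (the FC-centre) cannot work, because the conclusion you announce---that $F$ is BFC with bound polynomial in $\alpha^{-1}$---is false. Take $G=D_n$ with $n$ odd; then $\dc(G)=(n+3)/4n>1/4$, the FC-centre is all of $D_n$, yet the reflections form a single conjugacy class of size $n$, and $|[F,F]|=n$ is unbounded as $n\to\infty$ while $\alpha$ stays above $1/4$. So no ``covering argument'' or appeal to Dietzmann can rescue the claim for this $\Gamma$: since $F/H$ abelian forces $[F,F]\subseteq H$, any admissible $H$ must have $|H|\ge n$.

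The paper's fix is precisely to \emph{not} take the whole FC-centre, but rather $\Gamma=\langle Y_r\rangle$ where $Y_r=\{x:[G:C_G(x)]\le r\}$ for a specific $r\asymp\alpha^{-2}$. The key step you are missing is \cref{neum.translates}: since $\mu(Y_r)\ge\alpha/2$, left-invariance forces $\langle Y_r\rangle=Y_r^{O(1/\alpha)}$, so every element of $\Gamma$ is a product of $O(1/\alpha)$ elements each with conjugacy class of size at most $r$, giving the BFC bound $r^{O(1/\alpha)}$ on $\Gamma$ directly. (In the dihedral example this $\Gamma$ is exactly the rotation subgroup, which is abelian.) Your set $B$ is essentially $Y_{\lceil 2/\alpha\rceil}$, so you have all the ingredients; the repair is to replace $\Gamma:=F$ by $\Gamma:=\langle B\rangle$ and invoke this product-set argument rather than trying to bound conjugacy classes across the whole FC-centre.
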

Let us emphasise that we do not assume in \cref{main:mean} that $G$ is finitely generated.

\begin{prop}[converse to \cref{main:measures} (3) and \cref{main:mean} (3)]\label{neum.converse}
Let $G$ be a countable group with a subgroup $\Gamma$ of index $m$ and a subgroup $H\lhd\Gamma$ of cardinality $d$ such that $\Gamma/H$ is abelian. Then for every finitely additive left-invariant mean $\mu$ on $G$ we have $\pc_\mu(G)\ge\frac{1}{m^2d}$. Moreover, if $G$ is finitely generated and $M$ is a sequence of probability measures on $G$ that measures index uniformly then $\pc_M(G)\ge\frac{1}{m^2d}$.
\end{prop}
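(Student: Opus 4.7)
The plan is to exploit the hypotheses algebraically to show every element of $\Gamma$ has a centralizer of bounded index in $G$, and then aggregate this bound via positivity and left-invariance of $\mu$ (respectively, via uniform measurement of index).

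The key algebraic input is the following index bound. For each $x\in\Gamma$, the commutator map $\psi_x:\Gamma\to G$ given by $\psi_x(y)=[x,y]=x^{-1}y^{-1}xy$ lands in $H$ (because $\Gamma/H$ is abelian) and has fibres equal to the right cosets of $C_\Gamma(x)=\{y\in\Gamma:xy=yx\}$ in $\Gamma$ (because $\psi_x(y_1)=\psi_x(y_2)$ iff $y_2y_1^{-1}\in C_\Gamma(x)$). Hence $[\Gamma:C_\Gamma(x)]\le|H|=d$, and consequently $[G:C_\Gamma(x)]\le md$.

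For the mean statement, I use that left-invariance together with finite additivity force $\mu(K)=1/[G:K]$ for every finite-index subgroup $K\le G$, since the cosets of $K$ partition $G$ into pieces of equal $\mu$-mass. In particular $\mu(\Gamma)=1/m$ and, by the centralizer bound just established, $\mu(C_G(x))\ge\mu(C_\Gamma(x))\ge 1/(md)$ for every $x\in\Gamma$. Unpacking the definition of the product mean as an iterated integral gives $\dc_\mu(G)=\int_G\mu(C_G(x))\,\diff\mu(x)$, and applying positivity of $\mu$ to the pointwise estimate $\mu(C_G(\cdot))\ge(1/(md))\cdot 1_\Gamma$ yields
\[
\dc_\mu(G)\ge\frac{1}{md}\,\mu(\Gamma)=\frac{1}{m^2d}.
\]

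For the sequence version, the algebraic bound $[G:C_\Gamma(x)]\le md$ is unchanged, and the analogue of Fubini for finitely supported $\mu_n$ gives $\dc_{\mu_n}(G)\ge\sum_{x\in\Gamma}\mu_n(x)\,\mu_n(C_\Gamma(x))$. Fixing $\eps>0$, uniform measurement of index supplies a single threshold $N=N(\eps)$ such that for all $n\ge N$ simultaneously $\mu_n(\Gamma)\ge 1/m-\eps$ and $\mu_n(C_\Gamma(x))\ge 1/(md)-\eps$ for \emph{every} $x\in\Gamma$; the crucial feature exploited here is that uniformity is over all subgroups, which is needed because as $x$ varies through $\Gamma$ the centralizer $C_\Gamma(x)$ ranges over infinitely many subgroups in general. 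Substituting gives $\dc_{\mu_n}(G)\ge(1/m-\eps)(1/(md)-\eps)$ for all $n\ge N$; taking $\limsup_n$ and then $\eps\to 0$ gives $\dc_M(G)\ge 1/(m^2d)$. I do not foresee a genuine obstacle: the content of the proposition is the one-line centralizer bound of the previous paragraph, and the mean/measure-sequence machinery packages the averaging step cleanly.
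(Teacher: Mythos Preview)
Your proof is correct and shares the paper's algebraic core: for $x\in\Gamma$ the commutator map $y\mapsto[x,y]$ has image in $H$ and fibres that are cosets of $C_\Gamma(x)$, yielding $[\Gamma:C_\Gamma(x)]\le d$ and hence $[G:C_\Gamma(x)]\le md$. The paper packages this same observation slightly differently for the mean statement, partitioning $\Gamma$ according to the value of the commutator and applying pigeonhole and left-invariance to get $\mu(C_G(x))\ge\tfrac{1}{md}$ directly, whereas you first extract the purely algebraic index bound and then invoke $\mu(K)=1/[G:K]$; these are equivalent. The genuine difference is in the ``Moreover'' clause: the paper observes that $G$ is amenable (being virtually finite-by-abelian) and then appeals to \cref{indep} to reduce the sequence case to the mean case, whereas you handle the sequence case directly by feeding the uniform index bound $[G:C_\Gamma(x)]\le md$ into the definition of uniform measurement of index. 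Your route is more self-contained here, avoiding the machinery of \cref{indep} (which in turn rests on \cref{neumann.detect} and \cref{indep.cosets}); the paper's route, on the other hand, illustrates how \cref{indep} makes such transfers automatic once established.
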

\begin{remark*}
For non-finitely generated groups it is not the case that finite-by-abelian implies virtually abelian. An example showing this appears on Tao's blog \cite{tao.blog}: if $V$ is an infinite vector space over a finite field $\F$ and $b:V\times V\to\F$ is a non-degenerate anti-symmetric bilinear form then the product set $V\times\F$ with group operation defined by $(v,x)(w,y)=(v+w,x+y+b(v,w))$ for $v,w \in V$ and $x,y\in F$ is finite-by-abelian but not virtually abelian. \cref{neum.converse} therefore shows that the characterisation proposed by \cref{conj:amv} does not hold for groups that are not finitely generated when degree of commutativity is defined with respect to an invariant mean. 
\end{remark*}

\bigskip

\noindent\textsc{Independence of degree of commutativity from the method of averaging.} In a forthcoming paper, Antol\'in, Martino and Ventura show that in a virtually abelian group $G$ the value of $\dc_S(G)$ is independent of the choice of generating set $S$, and that the $\limsup$ appearing in the definition of $\dc_S(G)$ is actually a limit. This leads immediately to the following corollary of \cref{amv}.
\begin{corollary}[Antol\'in--Martino--Ventura, unpublished]\label{amv.indep}
Let $G$ be a finitely generated residually finite group and let $S,S'$ be two generating sets, each satisfying \eqref{eq:subexp.growth}. Then $\dc_S(G)=\dc_{S'}(G)$, and the $\limsup$ appearing in the definition of $\dc_S(G)$ is actually a limit.
\end{corollary}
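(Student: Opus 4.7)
The plan is to prove Corollary \ref{amv.indep} by splitting into cases according to the dichotomy furnished by \cref{amv}, namely that for a finitely generated residually finite group $G$ equipped with a generating set $T$ satisfying \eqref{eq:subexp.growth}, either $\dc_T(G)>0$ and $G$ is virtually abelian, or $\dc_T(G)=0$. Applying this to both $S$ and $S'$, the four a priori possible combinations collapse to two: either both degrees of commutativity vanish, or $G$ is virtually abelian and both are positive.

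In the first case, I would argue as follows. If $\dc_S(G)=0$, then since $\dc_{\mu_n}(G)\ge 0$ for every $n$ (where $\mu_n$ denotes the uniform measure on $S^n$), the fact that the $\limsup$ equals zero forces the sequence itself to tend to zero; thus the $\limsup$ in the definition of $\dc_S(G)$ is actually a limit. The same reasoning applies verbatim to $S'$, and we obtain $\dc_S(G)=0=\dc_{S'}(G)$.

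In the second case, $G$ is virtually abelian, so I would simply invoke the forthcoming unpublished result of Antol\'in, Martino and Ventura, which asserts precisely that for virtually abelian groups the quantity $\dc_T(G)$ is independent of the generating set $T$ (satisfying \eqref{eq:subexp.growth}) and is realised as a genuine limit. Applied to both $S$ and $S'$ this yields both $\dc_S(G)=\dc_{S'}(G)$ and the convergence assertion, completing the proof.

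There is no real obstacle here: the argument is a one-line dichotomy followed by citing two known inputs (\cref{amv} and the forthcoming AMV result), which is exactly the sense in which the author describes the corollary as following ``immediately''. The only mildly subtle point worth stating carefully is the observation that a non-negative sequence whose $\limsup$ is $0$ must converge to $0$, which is what upgrades the $\limsup$ to a $\lim$ in the non-virtually-abelian case.
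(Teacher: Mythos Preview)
Your proposal is correct and matches the paper's approach exactly: the paper does not give a formal proof but explains that the corollary follows ``immediately'' from \cref{amv} together with the forthcoming Antol\'in--Martino--Ventura result for virtually abelian groups, which is precisely the dichotomy you spell out. The only thing you add is the explicit observation that a non-negative sequence with $\limsup$ equal to $0$ converges to $0$, which is indeed the one detail worth making explicit.
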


Here we prove a similar result, which appears to be a necessary ingredient in our proof of \cref{main:measures} (see \cref{rem:mean}).

\begin{theorem}\label{indep}
Let $G$ be a finitely generated group, and let $M$ be a sequence of probability measures on $G$ that measures index uniformly. Then the $\limsup$ in the definition of $\dc_M(G)$ is actually a limit, and $\dc_{M'}(G)=\dc_M(G)$ for every other sequence $M'$ of probability measures on $G$ that measures index uniformly. Moreover, if $G$ is amenable then $\dc_\mu(G)=\dc_M(G)$ for every finitely additive left-invariant mean $\mu$ on $G$.
\end{theorem}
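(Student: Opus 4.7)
The plan is to show that $\dc_{\mu_n}(G)$ is well approximated, uniformly in $n$, by a quantity $S(M)$ determined entirely by the subgroup lattice of $G$, so that $\dc_M(G)$ equals an intrinsic value of $G$ that is independent of $M$. First I would write $\dc_{\mu_n}(G) = \sum_{x \in G} \mu_n(x) \mu_n(C_G(x))$, and then apply the uniform measurement of index to replace $\mu_n(C_G(x))$ by $1/[G:C_G(x)] = 1/|x^G|$ at cost $o(1)$ as $n \to \infty$, uniformly in $x$ (with the convention $1/\infty=0$). This gives $\dc_{\mu_n}(G) = \sum_x \mu_n(x)/|x^G| + o(1)$. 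Truncating by conjugacy class size, the contribution from $x$ with $M < |x^G| < \infty$ is at most $1/M$, so $\dc_{\mu_n}(G) = \sum_{x : |x^G| \le M} \mu_n(x)/|x^G| + O(1/M) + o_n(1)$.

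Next I would reorganise the truncated sum by centraliser. Since $G$ is finitely generated, the set $\mathcal{H}_M$ of subgroups of index at most $M$ is finite. For $H \in \mathcal{H}_M$, set $Y_H = \{x \in G : C_G(x) = H\}$; then $\{x : |x^G| \le M\} = \bigsqcup_{H \in \mathcal{H}_M} Y_H$ and $x \in Y_H$ forces $|x^G| = [G:H]$, so the truncated sum becomes $\sum_{H \in \mathcal{H}_M} \mu_n(Y_H)/[G:H]$. Writing $Y_H = C_G(H) \setminus \bigcup_{K \supsetneq H} C_G(K)$ with $K$ ranging over the finitely many subgroups strictly containing $H$ (automatically of strictly smaller index), inclusion--exclusion expresses $1_{Y_H}$ as a finite $\Z$-linear combination of $1_{C_G(L)}$ for various subgroups $L \supseteq H$. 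Invoking uniform measurement once more gives $\mu_n(C_G(L)) \to 1/[G:C_G(L)]$ for each such $L$ (with value $0$ when $C_G(L)$ has infinite index), so $\mu_n(Y_H)$ converges to some intrinsic $y_H \in [0,1]$.

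Setting $S(M) = \sum_{H \in \mathcal{H}_M} y_H/[G:H]$, this sequence is non-decreasing and bounded by $1$, so $S^\infty := \lim_{M \to \infty} S(M)$ exists. Combining the approximations, for every $\eps > 0$ and every $M$ one has $|\dc_{\mu_n}(G) - S(M)| \le 1/M + \eps$ for all sufficiently large $n$, and letting $n \to \infty$ then $M \to \infty$ yields $\dc_{\mu_n}(G) \to S^\infty$. This simultaneously proves that the $\limsup$ in the definition of $\dc_M(G)$ is actually a limit, that $\dc_M(G) = S^\infty$, and hence that $\dc_{M'}(G) = S^\infty = \dc_M(G)$ for any other sequence $M'$ measuring index uniformly. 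For the invariant-mean statement, the same computation applies with $\mu$ in place of $\mu_n$ (no limit in $n$), using the classical identity $\mu(H) = 1/[G:H]$ for every subgroup $H$: the finite-index case follows from partitioning $G$ into translates of $H$, the infinite-index case from the existence of infinitely many disjoint left-translates. The main technical care will be needed at the inclusion--exclusion step, where one must verify that the subgroups $L$ that arise, whose centralisers $C_G(L)$ may themselves have infinite index, are handled correctly by uniform measurement---which is fortunately built into the definition, applying to all subgroups and not only to those of finite index.
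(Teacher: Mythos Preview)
Your argument is correct, and the inclusion--exclusion step (expressing $1_{Y_H}$ as a finite $\Z$-linear combination of indicators $1_{C_G(L)}$ via $C_G(K_1)\cap\cdots\cap C_G(K_j)=C_G(\langle K_1,\ldots,K_j\rangle)$, with each $\langle K_1,\ldots,K_j\rangle$ among the finitely many subgroups containing $H$) is exactly the mechanism the paper uses as well. Where you differ is in how you arrange for there to be only finitely many relevant centralisers. The paper first invokes \cref{neumann.detect} (together with \cref{measures=>detects}) to conclude that $G$ is virtually abelian, hence amenable, whenever $\dc_M(G)>0$; it then fixes an invariant mean $\nu$ with $\dc_\nu(G)=\alpha>0$ and uses \cref{indep.cosets} to produce a finite-index normal subgroup $\Gamma_r$ in which \emph{every} element has centraliser of index at most $r^{6/\alpha+2}$, and performs the inclusion--exclusion inside $\Gamma_r$. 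Your route is more direct: you simply truncate by conjugacy-class size and appeal only to the elementary fact (\cref{fin.gen.fin.ind}) that a finitely generated group has finitely many subgroups of each bounded index. This avoids the preliminary pass through \cref{neumann.detect} and \cref{indep.cosets} altogether, and in particular does not need amenability to handle the sequence case. The paper's detour is not wasted, since \cref{indep.cosets} is needed anyway for \cref{main:mean} (3), but your argument is a cleaner stand-alone proof of \cref{indep}.
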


As a by-product of the proofs of our theorems we also obtain the following partial version of \cref{indep} for groups that are not finitely generated.
\begin{corollary}\label{indep.from.mu.and.F}
Let $G$ be a countable group. Then the following hold.
\begin{enumerate}
\item If $\mu$ is a finitely additive left-invariant mean on $G$ such that $\pc_\mu(G)=0$ then $\pc_{\mu'}(G)=0$ for every other finitely additive left-invariant mean $\mu'$ on $G$.
\item If $\alpha>\frac{1}{2}$, and if there is a finitely additive left-invariant mean $\mu$ on $G$ such that $\pc_\mu(G)=\alpha$ or some sequence $M$ of probability measures on $G$ that measures index uniformly such that $\pc_M(G)=\alpha$, then $\pc_{\mu'}(G)=\alpha$ for every finitely additive left-invariant mean $\mu'$ on $G$, and $\pc_{M'}(G)=\alpha$ for every sequence $M'$ of probability measures on $G$ that measures index uniformly. Moreover, the $\limsup$ in the definition of $\dc_{M'}(G)$ is actually a limit.
\end{enumerate}
\end{corollary}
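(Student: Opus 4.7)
The plan is to use Theorems \ref{main:measures} and \ref{main:mean} together with the converse \cref{neum.converse} in order to pass between different kinds of averaging.

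For part (1) I would argue by contrapositive. If some finitely additive left-invariant mean $\mu'$ on $G$ satisfied $\dc_{\mu'}(G)>0$, then \cref{main:mean} (3) would supply a normal subgroup $\Gamma\lhd G$ of finite index $m$ and a finite normal subgroup $H\subset\Gamma$ of cardinality $d$ with $\Gamma/H$ abelian, and \cref{neum.converse} would then give $\dc_\mu(G)\ge\frac{1}{m^2d}>0$ for every other left-invariant mean $\mu$ on $G$, contradicting the hypothesis $\dc_\mu(G)=0$.

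For part (2) the key observation is that once the centre $Z=Z(G)$ has finite index $m$ in $G$, the quantity $\dc_\nu(G)$ depends only on the masses $\nu$ assigns to cosets of $Z$. Indeed, for coset representatives $g_1,\dots,g_m$ of $Z$ and any $z,w\in Z$ one has $[g_iz,g_jw]=[g_i,g_j]$, so $g_iz$ and $g_jw$ commute if and only if $[g_i,g_j]=e$, and hence
\[
\dc_\nu(G)=\sum_{(i,j):\,[g_i,g_j]=e}\nu(g_iZ)\nu(g_jZ)
\]
for any probability measure or finitely additive mean $\nu$ on $G$; the same identity makes the index set on the right-hand side independent of the choice of representatives. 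Starting from either form of the hypothesis, the relevant second part of a main theorem (\cref{main:mean} (2) when the hypothesis is about a mean, \cref{main:measures} (2) when it is about a sequence) forces $[G:Z]=m<\infty$, so $G$ is virtually abelian, and in particular amenable. For any left-invariant mean $\mu'$, left-invariance and finite additivity give $\mu'(g_iZ)=1/m$, so the displayed formula evaluates to the $G$-invariant quantity $c(G):=|\{(i,j):[g_i,g_j]=e\}|/m^2$. For any sequence $M'=(\mu_n')$ measuring index uniformly, applying the definition to the finite-index subgroup $Z$ gives $\mu_n'(g_iZ)\to1/m$ uniformly in $i$, so each of the finitely many terms in the sum converges and $\dc_{\mu_n'}(G)\to c(G)$ as a genuine limit (not merely a $\limsup$). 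Specialising to the given witness $\mu$ or $M$ identifies $c(G)$ with $\alpha$, and so $\dc_{\mu'}(G)=\dc_{M'}(G)=\alpha$ for all admissible $\mu'$ and $M'$.

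No serious obstacle arises. The main subtlety is that the hypothesis $\alpha>\tfrac12$ is essential to this strategy, since it is precisely what lets the main theorems give finite index for the \emph{centre} of $G$, as opposed to the less rigid finite-by-abelian structure of part (3); this is what reduces the whole computation to a finite sum depending only on the coset masses $\nu(g_iZ)$.
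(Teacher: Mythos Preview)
Your proof is correct and follows essentially the same approach as the paper's. Part (1) is identical: \cref{main:mean} (3) plus \cref{neum.converse}. For part (2) the paper simply cites \cref{main:measures} (2), \cref{main:mean} (2), and \cref{v.central.implies.indep}; your argument inlines the content of \cref{v.central.implies.indep}, writing the common value as $|\{(i,j):[g_i,g_j]=e\}|/m^2$ instead of $\tfrac{1}{m}\sum_{t\in T}1/[G:C_G(t)]$, which is the same quantity since $C_G(t)\supset Z$ is a union of $m/[G:C_G(t)]$ cosets of $Z$.
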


\begin{remark*}
It follows from \cref{main:mean} and \cref{neum.converse} that in an arbitrary amenable group $G$ the degree of commutativity $\pc_\mu(G)$ with respect to a given finitely additive left-invariant mean $\mu$ is bounded from below and above in terms of $\pc_{\mu'}(G)$ for any other finitely additive left-invariant mean $\mu'$. It would be interesting to see whether different invariant means really can give different values of $\pc$ in situations not governed by \cref{indep} and \cref{indep.from.mu.and.F}, which is to say when they take values in $(0,\frac{1}{2}]$ in a group that is not finitely generated.
\end{remark*}

\begin{remark*}Martino, Valiunas, Ventura and the author \cite{mtvv} have subsequently generalised \cref{indep} to cover the probability that an arbitrary equation in the elements of $G$ is satisfied.
\end{remark*}

\bigskip
\noindent\textsc{Outline of the paper.} We prove \cref{neumann.detect} in \cref{sec:weak}, \cref{rw.uniform} in \cref{sec:uniform.rw}, and \cref{folner.uniform} in \cref{sec:uniform}. We prove \cref{indep} in \cref{sec:indep}, and Theorems \ref{main:measures} and \ref{main:mean}, \cref{neum.converse}, and \cref{indep.from.mu.and.F} in \cref{sec:proofs}. In \cref{sec:cr} we present an application of our results to \emph{conjugacy ratios} as introduced by Cox \cite{cox}.

\bigskip
\noindent\textsc{Acknowledgements}
This project was inspired by an interesting talk on the work \cite{amv} given by Laura Ciobanu at the Universit\'e de Neuch\^atel in May 2017. I am grateful to her both for this talk and for a very useful subsequent discussion. I am also grateful to Itai Benjamini, Emmanuel Breuillard, Ben Green and Alain Valette for helpful conversations, and to Yago Antol\'in, Dan Segal, Enric Ventura and an anonymous referee for helpful comments on earlier versions of this manuscript.

\section{A weak Neumann-type theorem}\label{sec:weak}
In this section we prove \cref{neumann.detect}. We start with the following result, which is based on a simple technique used by Neumann \cite{neumann} that we use repeatedly throughout this paper.
\begin{prop}\label{large.centraliser.detect}
Let $M=(\mu_n)_{n=1}^\infty$ be a sequence of measures that detects index uniformly at rate $\pi$. Let $\alpha\in(0,1]$, and suppose that $\dc_M(G)\ge\alpha$. Let $\gamma\in(0,1)$ be such that $\pi(\gamma)<\alpha$, and write
\[
X=\{x\in G:[G:C_G(x)]\le\textstyle\frac{1}{\gamma}\}.
\]
Then $\limsup_{n\to\infty}\mu_n(X)\ge\alpha-\pi(\gamma)$.
\end{prop}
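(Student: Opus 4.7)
The plan is to split the commuting probability according to whether one coordinate lies in $X$, exploiting the fact that elements outside $X$ have centralisers of large index, whose $\mu_n$-mass will be controlled by the uniform detection hypothesis. Since $y$ commutes with $x$ if and only if $y\in C_G(x)$, the first step is to rewrite
\[
\dc_{\mu_n}(G)=(\mu_n\times\mu_n)(\{(x,y):xy=yx\})=\int_G\mu_n(C_G(x))\diff\mu_n(x).
\]

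Next I would split this integral according to whether $x\in X$. The contribution from $X$ is bounded trivially by $\mu_n(X)$, using $\mu_n(C_G(x))\le 1$. For $x\notin X$ we have $[G:C_G(x)]>1/\gamma$, so the integer $m=\lceil 1/\gamma\rceil$ satisfies both $[G:C_G(x)]\ge m$ and $1/m\le\gamma$. Applying \cref{def:detect}, for any $\eps>0$ there is a single $N=N(\eps)$ --- crucially independent of $x$ --- such that
\[
\mu_n(C_G(x))\le\pi(1/m)+\eps\le\pi(\gamma)+\eps
\]
for every $n\ge N$, using the monotonicity of $\pi$ for the second inequality. The uniformity in $H$ built into \cref{def:detect} is precisely what allows the same $N$ to work for every centraliser $C_G(x)$ with $x\notin X$ simultaneously.

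Combining the two pieces, for $n\ge N(\eps)$ this yields
\[
\dc_{\mu_n}(G)\le\mu_n(X)+(\pi(\gamma)+\eps)\mu_n(G\setminus X)\le\mu_n(X)+\pi(\gamma)+\eps,
\]
so $\mu_n(X)\ge\dc_{\mu_n}(G)-\pi(\gamma)-\eps$. Taking $\limsup_{n\to\infty}$ and using $\dc_M(G)\ge\alpha$ then gives $\limsup_n\mu_n(X)\ge\alpha-\pi(\gamma)-\eps$, and letting $\eps\to 0$ finishes the proof. There is no real obstacle here: the one point that needs care is the uniformity across the infinite family of centralisers $\{C_G(x):x\notin X\}$, which is exactly the content of the uniform detection hypothesis, while the assumption $\pi(\gamma)<\alpha$ simply guarantees that the resulting lower bound $\alpha-\pi(\gamma)$ is non-trivial.
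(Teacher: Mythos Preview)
Your proof is correct and follows essentially the same approach as the paper: both split $\dc_{\mu_n}(G)=\int_G\mu_n(C_G(x))\diff\mu_n(x)$ according to whether $x\in X$, bound the $X$-part trivially by $\mu_n(X)$, and use uniform detection of index to bound the complementary part by $\pi(\gamma)+o(1)$. The only cosmetic difference is that the paper passes to a subsequence realising the $\limsup$ of $\dc_{\mu_n}(G)$ at the outset, whereas you keep the inequality for all large $n$ and take the $\limsup$ at the end.
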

\begin{proof}
By definition of $\dc_M$ there exists a sequence $n_1<n_2<\ldots$ such that $\dc_{\mu_{n_i}}(G)\ge\alpha-o(1)$. Writing $\E^{(n)}$ for expectation with respect to $\mu_n$, this means precisely that
\[
\E^{(n_i)}_{x\in G}(\mu_{n_i}(C_G(x)))\ge\alpha-o(1).
\]
Following Neumann \cite{neumann}, we note that therefore
\begin{align*}
\alpha&\le\mu_{n_i}(X)\E^{(n_i)}_{x\in X}(\mu_{n_i}(C_G(x)))+\mu_{n_i}(G\backslash X)\E^{(n_i)}_{x\in G\backslash X}(\mu_{n_i}(C_G(x)))+o(1)\\
    &\le\mu_{n_i}(X)+\pi(\gamma)+o(1),
\end{align*}
the last inequality being by uniform detection of index, and the result follows.
\end{proof}

\begin{lemma}[{\cite[Proposition 1.1.1]{ls}}]\label{fin.gen.fin.ind}
Let $m,r\in\N$, and let $G$ be a group generated by $r$ elements. Then $G$ has at most $O_{m,r}(1)$ subgroups of index $m$.
\end{lemma}

\begin{proof}[Proof of \cref{neumann.detect}]
Let $\gamma=\frac{1}{2}\inf\{\beta\in(0,1]:\pi(\beta)\ge\frac{\alpha}{2}\}$, noting that therefore $\pi(\gamma)<\frac{\alpha}{2}$, and write $X=\{x\in G:[G:C_G(x)]\le\frac{1}{\gamma}\}$, so that \cref{large.centraliser.detect} gives $\limsup_{n\to\infty}\mu_n(X)>\frac{\alpha}{2}$. This implies in particular that the group $\Gamma$ generated by $X$ satisfies $\limsup_{n\to\infty}\mu_n(\Gamma)>\frac{\alpha}{2}$, and hence $[G:\Gamma]\le\frac{1}{\gamma}$ by the definitions of $\gamma$ and uniform detection of index at rate $\pi$.

 \cref{fin.gen.fin.ind} and the definition of $\gamma$ imply that there are at most $O_{r,\pi,\alpha}(1)$ possibilities for $C_G(x)$ with $x\in X$, and so their intersection has index at most $O_{r,\pi,\alpha}(1)$ in $G$. However, this intersection is contained in $C_G(\Gamma)$, and so its intersection with $\Gamma$, which again has index at most $O_{r,\pi,\alpha}(1)$ in $G$, is abelian.
\end{proof}

\section{Uniform measurement of index by random walks}\label{sec:uniform.rw}
In this section we prove \cref{rw.uniform}. Our key tools will be results of Morris and Peres \cite{mp} concerning mixing time and heat-kernel decay of Markov chains. Following the set-up of their paper, let $\{p(x,y)\}$ be the transition probabilities for an irreducible Markov chain on countable state space $V$. A measure $\pi$ on $V$ is said to be \emph{stationary} for $p$ if $\sum_{x\in V}\pi(x)p(x,y)=\pi(y)$ for all $x\in V$; if $V$ is finite we also require $\pi$ to be a probability measure.
\begin{lemma}\label{unif.stationary}
If $p$ is symmetric and $\pi$ is uniform then $\pi$ is stationary for $p$.
\end{lemma}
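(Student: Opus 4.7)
The plan is simply a direct one-line calculation. Let $c$ denote the constant value of $\pi$, so $\pi(x)=c$ for every $x\in V$ (with $c=1/|V|$ in the case that $V$ is finite). Then for any $y\in V$ I would write
\[
\sum_{x\in V}\pi(x)p(x,y)=c\sum_{x\in V}p(x,y)=c\sum_{x\in V}p(y,x)=c=\pi(y),
\]
where the second equality uses the symmetry hypothesis $p(x,y)=p(y,x)$ and the third uses the fact that $p$ is a Markov transition kernel, so $\sum_{x\in V}p(y,x)=1$. This verifies the stationarity condition $\sum_{x}\pi(x)p(x,y)=\pi(y)$.

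There is essentially no obstacle: the content of the lemma is the observation that symmetry converts the trivial row-sum identity for a transition kernel into the column-sum identity, and the latter is exactly what stationarity of the uniform measure demands. The only small bookkeeping point to mention is the convention for the uniform measure on an infinite state space, where one allows $\pi$ to be a (non-normalised) constant measure, so that the stationarity equation still makes sense pointwise.
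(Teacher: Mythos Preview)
Your proof is correct and is essentially identical to the paper's own one-line argument, which simply notes that $\sum_{x\in V}p(x,y)=\sum_{x\in V}p(y,x)=1$ by symmetry and the transition-kernel property. Your added remark about the uniform measure in the infinite case is consistent with the paper's conventions.
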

\begin{proof}
We have $\sum_{x\in V}p(x,y)=\sum_{x\in V}p(y,x)=1$.
\end{proof}
Given a stationary measure $\pi$ on $V$, write $\pi_\ast=\inf_{x\in V}\pi(x)$. For $A\subset V$ write
\[
|\partial A|=\sum_{x\in A,y\notin A}\pi(x)p(x,y),
\]
and define the \emph{conductance} $\Phi_A$ of $A\ne\varnothing$ via
\[
\Phi_A=\frac{|\partial A|}{\pi(A)}.
\]
Define $\Phi:[\pi_\ast,\infty)\to\R$ via
\[
\Phi(r)=\left\{\begin{array}{cl}
                            \inf\{\Phi_A:\pi(A)\le r\} &  \text{if $r\le\frac{1}{2}$}\\
                            \Phi(\frac{1}{2}) & \text{otherwise}
                      \end{array}\right.
\]
if $V$ is finite, or via
\[
\Phi(r)=\inf\{\Phi_A:\pi(A)\le r\}
\]
if $V$ is infinite. Note that in each case irreducibility implies that
\begin{equation}\label{eq:Phi.min}
\Phi(r)\ge\frac{\inf\{\pi(x)p(x,y):p(x,y)>0\}}{r}
\end{equation}
for every $r$.
\begin{theorem}[Morris--Peres \cite{mp}]\label{thm:mp}
Let $\eps>0$, and let $c\in(0,\frac{1}{2}]$ be such that $p(x,x)\ge c$ for all $x\in V$. Then for every
\begin{equation}\label{eq:thm.n.large}
n\ge1+\frac{(1-c)^2}{c^2}\int_{4\pi_\ast}^{4/\eps}\frac{4\diff u}{u\Phi(u)^2}
\end{equation}
we have
\[
\left|\frac{p^n(x,y)-\pi(y)}{\pi(y)}\right|\le\eps
\]
if $V$ is finite \cite[Theorem 5]{mp}, or
\[
\left|\frac{p^n(x,y)}{\pi(y)}\right|\le\eps
\]
if $V$ is infinite \cite[Theorem 2]{mp}.
\end{theorem}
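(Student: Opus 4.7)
The plan is to use the evolving set method, which replaces the study of the transition density $p^n(x,\cdot)/\pi$ by that of a set-valued Markov chain $(S_n)_{n\ge 0}$ for which $\pi(S_n)$ is a martingale whose $\sqrt{\cdot}$-moment contracts at a rate controlled by the conductance profile $\Phi$. Iterating the one-step contraction and comparing with an ODE will produce the integral bound \eqref{eq:thm.n.large}.

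First I would set up the evolving set chain: with $S_0=\{x\}$, at step $n$ sample $U_{n+1}$ uniformly on $[0,1]$ independently of the past, and put
\[
S_{n+1}=\bigl\{y\in V:Q(S_n,y)\ge U_{n+1}\pi(y)\bigr\},\qquad Q(A,y):=\sum_{z\in A}\pi(z)p(z,y).
\]
The identity $\sum_yQ(A,y)=\pi(A)$ and the uniformity of $U_{n+1}$ give $\E[\pi(S_{n+1})\mid S_n]=\pi(S_n)$, so $(\pi(S_n))$ is a non-negative martingale. A short induction, using the definition of $Q$, then establishes the fundamental identity
\[
p^n(x,y)=\frac{\pi(y)}{\pi(x)}\,\E_{\{x\}}\bigl[\mathbf{1}_{y\in S_n}\bigr],
\]
and a Cauchy--Schwarz manipulation of it gives $p^n(x,y)/\pi(y)\le\E_{\{x\}}[\sqrt{\pi(S_n)}]^{2}/\pi(x)$, reducing the theorem to showing that $\E_{\{x\}}[\sqrt{\pi(S_n)}]$ decays at a rate controlled by $\Phi$.

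The heart of the argument is the single-step contraction
\[
\E\bigl[\sqrt{\pi(S_{n+1})}\,\bigm|\,S_n\bigr]\le\sqrt{\pi(S_n)}\left(1-\kappa\,\frac{c^2}{(1-c)^2}\,\Phi(\pi(S_n))^2\right)
\]
for an absolute $\kappa>0$. To prove this I would write $\pi(S_{n+1})=\pi(S_n)(1+T)$ with $\E[T\mid S_n]=0$ and apply $\sqrt{1+t}\le 1+t/2-t^2/8$: the linear term vanishes by the martingale property, and the quadratic term is bounded below by noting that $\E[T^2\mid S_n]\gtrsim|\partial S_n|^2/\pi(S_n)^2\ge\Phi(\pi(S_n))^2$, with the first inequality coming from a direct analysis of which vertices enter or leave $S_n$ as $U_{n+1}$ varies. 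The laziness hypothesis $p(z,z)\ge c$ enters at this stage to control how self-loops kill the variance of $T$, and produces the explicit $c^2/(1-c)^2$ factor.

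Finally I would iterate. Writing $a_n=\E_{\{x\}}[\sqrt{\pi(S_n)}]$, Jensen's inequality combined with the contraction yields a discrete inequality for $a_n^2$ comparable to the ODE $r'(t)=-2\kappa c^2(1-c)^{-2}\,r(t)\,\Phi(r(t))^2$; separating variables and integrating, together with the reduction from the second paragraph, shows that to force $p^n(x,y)/\pi(y)\le\eps$ (infinite case) or within $\eps$ of $1$ (finite case) it suffices to take $n$ at least the bound in \eqref{eq:thm.n.large}. The finite case requires the two-sided estimate, which follows by the standard trick of applying the contraction argument to both tails once $\pi$ is a probability measure and $\Phi$ is cut off at level $1/2$. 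The main obstacle will be the one-step contraction: the lower bound $\E[T^2\mid S_n]\gtrsim|\partial S_n|^2/\pi(S_n)^2$ and the correct $c$-dependence require a careful edge-by-edge accounting of how boundary and self-loop mass propagates under the evolving set rule, and is where essentially all of the probabilistic structure of the chain is used.
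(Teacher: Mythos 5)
The paper does not prove this result: \cref{thm:mp} is stated as a direct quotation of \cite[Theorems 2 and 5]{mp} and used as a black box, so there is no in-paper argument to compare against. Your sketch is therefore really a reconstruction of the Morris--Peres proof, and at the level of the broad outline -- evolving sets, a one-step $\sqrt{1+t}$-Taylor contraction controlled by conductance, and an ODE comparison to extract the integral in \eqref{eq:thm.n.large} -- it does follow their method.

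There is, however, a genuine gap: you have dropped the Doob transform, and as a result your reduction step is false. Under the raw evolving-set measure, $\pi(S_n)$ is a martingale with $\E_{\{x\}}[\pi(S_n)]=\pi(x)$ for all $n$, and the bound you claim,
\[
\frac{p^n(x,y)}{\pi(y)}\le\frac{\E_{\{x\}}\bigl[\sqrt{\pi(S_n)}\bigr]^2}{\pi(x)},
\]
already fails at $n=0$: taking $y=x$ the left side is $1/\pi(x)$ while the right side is $\pi(x)/\pi(x)=1$, so the inequality forces $\pi(x)\ge 1$, which is wrong whenever $\pi$ is a probability measure (the finite case). Morris and Peres avoid this by working under the $h$-transform of the evolving-set chain with $h(S)=\pi(S)$, which conditions the set process to survive; under the transformed law $\hat{\Prob}$ the identity becomes
\[
\frac{p^n(x,y)}{\pi(y)}=\hat{\E}_{\{x\}}\!\left[\frac{\mathbf{1}_{y\in S_n}}{\pi(S_n)}\right]\le\frac{1}{\sqrt{\pi(y)}}\,\hat{\E}_{\{x\}}\!\left[\pi(S_n)^{-1/2}\right],
\]
and the quantity whose contraction is controlled by $\Phi$ is $\hat{\E}[\pi(S_n)^{-1/2}]$, not $\E[\sqrt{\pi(S_n)}]$. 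This is not a cosmetic change: the biasing by $\pi(S_n)$ is precisely what keeps the chain away from extinction and makes the negative moment the right object, and it is what makes the explicit $c^2/(1-c)^2$ factor emerge cleanly from the laziness hypothesis. Finally, the finite-case two-sided conclusion $|p^n(x,y)-\pi(y)|/\pi(y)\le\eps$ is strictly stronger than the infinite-case one-sided bound and requires more than ``applying the contraction argument to both tails''; Morris--Peres handle it by cutting $\Phi$ off at level $\tfrac12$ (as in the definition preceding \eqref{eq:Phi.min}) and running a symmetrised version of the argument on $S_n$ and $S_n^c$ simultaneously.
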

\begin{proof}[Proof of \cref{rw.uniform}]
Write $c=\min_{x\in\supp\mu}\mu(x)$. Let $\eps>0$ and let $H$ be a subgroup of $G$. We will prove that for every
\begin{equation}\label{eq:n.size}
n\ge1+\frac{32(1-c)^2}{c^4\eps^2}
\end{equation}
we have
\begin{equation}\label{eq:target.finite}
\left|\mu^{\ast n}(xH)-\textstyle\frac{1}{[G:H]}\right|\le\eps
\end{equation}
for every $x\in G$ if $H$ has finite index, or
\begin{equation}\label{eq:target.infinite}
|\mu^{\ast n}(xH)|\le\eps
\end{equation}
for every $x\in G$ if $H$ has infinite index; in particular, this implies the theorem.

The left random walk on $G$ with respect to $\mu$ induces an irreducible Markov chain on the left cosets $xH$ of $H$. Let $p$ be the transition matrix of this Markov chain. The symmetry of $\mu$ implies that $p$ is symmetric, and hence, by \cref{unif.stationary}, that we may assume that $\pi$ is the uniform probability measure on $G/H$ if $H$ has finite index, or that $\pi$ is everywhere equal to $1$ if $H$ has infinite index. The inequality \eqref{eq:target.finite} is therefore equivalent to the inequality
\[
\left|\frac{p^n(H,xH)-\pi(xH)}{\pi(xH)}\right|\le\eps[G:H],
\]
whilst the inequality \eqref{eq:target.infinite} is equivalent to the inequality
\[
\left|\frac{p^n(H,xH)}{\pi(xH)}\right|\le\eps.
\]
However, since $\inf\{p(xH,yH):p(xH,yH)>0\}\ge c$, it follows from \eqref{eq:Phi.min} that the integrand of \eqref{eq:thm.n.large} is at most $4u[G:H]^2/c^2$ if $H$ has finite index, or at most $4u/c^2$ if $H$ has infinite index, and so \cref{thm:mp} implies that \eqref{eq:target.finite} is satisfied if
\[
n\ge1+\frac{4(1-c)^2[G:H]^2}{c^4}\int_{0}^{4/(\eps[G:H])}u\diff u,
\]
or that \eqref{eq:target.infinite} is satisfied if
\[
n\ge1+\frac{4(1-c)^2}{c^4}\int_{0}^{4/\eps}u\diff u.
\]
In each case this is equivalent to \eqref{eq:n.size}, as required.
\end{proof}

\section{Uniform measurement of index by almost-invariant measures}\label{sec:uniform}
In this section we prove \cref{folner.uniform}. Throughout the section, in order to avoid having to repeat arguments that are essentially identical for finite- and infinite-index subgroups, we adopt the convention that if $M$ is an infinite quantity (such as the index of a subgroup or the number of vertices in a graph) then $1/M$ takes the value zero.

The only tool we really need other than the almost-invariance of the measures under consideration is the following standard fact.
\begin{lemma}\label{reps.in.bdd.ball}
Let $G$ be a group with finite symmetric generating set $S$ containing the identity, let $H$ be a subgroup of index at least $m\in\N$, and let $x\in G$. Then there exist $y_1,\ldots,y_m\in S^m$ such that the elements $y_ix$ belong to distinct left cosets of $H$.
\end{lemma}
\begin{proof}
We essentially reproduce the proof of the closely related \cite[Lemma 2.7]{bt}. If $S^{n+1}xH=S^nxH$ for a given $n$ then it follows by induction that $S^rxH=S^nxH$ for every $r\ge n$, and hence that $G=S^nxH$. We may therefore assume that $SxH\subsetneq S^2xH\subsetneq\ldots\subsetneq S^mxH$. However, this implies that the number of left cosets of $H$ having non-empty intersection with $S^nx$ is strictly increasing for $n=1,\ldots,m$, and so the lemma is proved. 
\end{proof}

\begin{proof}[Proof of \cref{folner.uniform}]
It follows from the definition of a sequence of almost-invariant measures that for every $y\in G$ we have
\begin{equation}\label{eq:almost-inv.unif}
|\mu_n(yxH)-\mu_n(xH)|\to0
\end{equation}
uniformly over all $x\in G$ and all subgroups $H$ of $G$. Since this convergence is uniform over all $y\in S^m$, \cref{reps.in.bdd.ball} implies that
\[
\mu_n(xH)\to\frac{1}{[G:H]}
\]
uniformly over all $x\in G$ and all subgroups $H$ of $G$ of index at most $m$. On the other hand, given $\eps>0$, let $m$ be the smallest integer greater than $\frac{1}{\eps}$. If $H$ has index greater than $m$ then \cref{reps.in.bdd.ball} and the fact that the convergence in \eqref{eq:almost-inv.unif} is uniform over $y\in S^{m+1}$ implies that provided $n$ is large enough in terms of $\eps$ we have $\mu_n(xH)<\eps$, and hence in particular
\[
\left|\mu_n(xH)-\frac{1}{[G:H]}\right|\le\eps.
\]
The convergence is therefore uniform over all $x\in G$ and all subgroups $H$ of $G$, as required.
\end{proof}

\section{A finite-index subgroup with uniformly bounded conjugacy classes}\label{sec:bfc}
At the heart of Neumann's original proof of \cref{neumann} lies a result essentially saying that if many elements of $G$ have bounded conjugacy classes then $G$ has a large subgroup in which \emph{every} element has a bounded conjugacy class. In this section we generalise his argument to arbitrary amenable groups as follows.
\begin{prop}\label{indep.cosets}
Let $G$ be an amenable group with finitely additive left-invariant mean $\mu$, let $\alpha\in(0,1]$, and suppose that $\pc_\mu(G)\ge\alpha$. Given $r\in\N$, write $Y_r=\{x\in G:[G:C_G(x)]\le r\}$, and write $\Gamma_r$ for the group generated by $Y_r$. Then
\begin{enumerate}
\renewcommand{\labelenumi}{(\Alph{enumi})}
\item $[G:\Gamma_r]\le\frac{1}{\alpha-1/r}$, and
\item if $r\ge\frac{2}{\alpha}$ then $[G:C_G(x)]\le r^{6/\alpha+2}$ for every $x\in\Gamma_r$.
\end{enumerate}
\end{prop}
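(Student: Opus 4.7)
The plan is to prove (A) by P. M. Neumann's averaging trick adapted to invariant means, and (B) by a greedy centralizer-intersection construction whose length is controlled using $\pc_\mu(G)\ge\alpha$.

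For (A), I first use left-invariance of $\mu$ to deduce $\mu(H)=1/[G:H]$ for every subgroup $H\le G$ (with the convention $1/\infty=0$), since the $[G:H]$ cosets of $H$ partition $G$ into pieces of equal $\mu$-measure. Writing $\pc_\mu(G)=\int_G\mu(C_G(x))\diff\mu(x)\ge\alpha$ and splitting the integral over $Y_r$ and its complement, I use the trivial bound $\mu(C_G(x))\le 1$ on $Y_r$ and $\mu(C_G(x))\le 1/r$ on $G\setminus Y_r$ (since $[G:C_G(x)]>r$ there). Rearranging $\alpha\le\mu(Y_r)+(1-\mu(Y_r))/r$ yields $\mu(Y_r)\ge\alpha-1/r$, and since $Y_r\subseteq\Gamma_r$ and $\Gamma_r$ is a subgroup, the bound $[G:\Gamma_r]\le 1/(\alpha-1/r)$ follows.

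For (B), I first observe that $Y_r$ is closed under $G$-conjugation (since $C_G(gxg^{-1})=gC_G(x)g^{-1}$), hence $\Gamma_r\lhd G$. Since $C_G(\Gamma_r)\subseteq C_G(x)$ for every $x\in\Gamma_r$, it suffices to bound $[G:C_G(\Gamma_r)]$. I would build a finite subset $\{y_1,\ldots,y_K\}\subseteq Y_r$ greedily: set $H_0=G$ and, while some $y\in Y_r$ satisfies $H_{k-1}\not\subseteq C_G(y)$, pick such a $y_k$ and set $H_k=H_{k-1}\cap C_G(y_k)$. At termination $H_K\subseteq C_G(y)$ for every $y\in Y_r$, so $H_K\subseteq C_G(\Gamma_r)$; the reverse inclusion holds throughout since each $y_i\in\Gamma_r$ implies $C_G(\Gamma_r)\subseteq C_G(y_i)$, so $H_K=C_G(\Gamma_r)$. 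Because each $[G:C_G(y_k)]\le r$, this gives $[G:C_G(\Gamma_r)]\le r^K$, and hence $[G:C_G(x)]\le r^K$ for every $x\in\Gamma_r$.

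The main obstacle is bounding $K$ by $6/\alpha+2$. My plan is to apply the averaging argument from (A) iteratively inside each $H_{k-1}$ equipped with the restricted, renormalized mean $\mu_{k-1}=\mu|_{H_{k-1}}/\mu(H_{k-1})$, which is left-$H_{k-1}$-invariant. Because $y_k\in Y_r$ forces $[H_{k-1}:H_k]\le r$, one has $\mu_{k-1}(H_k)\ge 1/r$ and can quantitatively track how much of the commuting-pair mass survives each refinement. Combining this with the hypothesis $r\ge 2/\alpha$ and a careful accounting of how $\pc_\mu(G)\ge\alpha$ redistributes across the chain $H_0\supset H_1\supset\cdots$, one should force termination after at most $6/\alpha+2$ steps: roughly, each successful refinement consumes an $\Omega(\alpha)$-sized slice of the commutation budget of $\alpha$, and the extra $+2$ absorbs rounding losses and the setup step. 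Making this commutation-mass tracking precise is the delicate part of the argument and is what determines the exponent in the final bound.
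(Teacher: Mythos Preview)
Your argument for (A) is correct and is essentially the paper's proof.

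Your approach to (B), however, cannot work: the quantity you are trying to bound, $[G:C_G(\Gamma_r)]$, is in general \emph{much} larger than $r^{6/\alpha+2}$, so no bookkeeping of ``commutation mass'' will yield $K\le 6/\alpha+2$. For a concrete counterexample, let $G$ be an extraspecial $p$-group of order $p^{2n+1}$. Then $\dc(G)>1/p$, so we may take $\alpha>1/p$ and $r=2p\ge 2/\alpha$. Every non-central element has centraliser index exactly $p$, so $Y_r=G=\Gamma_r$, and $C_G(\Gamma_r)=Z(G)$ has index $p^{2n}$. Your greedy chain descends from $G$ to $Z(G)$ in $K=2n$ steps (each step drops the index by $p$), and $2n$ is unbounded as $n\to\infty$, whereas $6/\alpha+2<6p+2$ is fixed. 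Thus $K\le 6/\alpha+2$ is simply false. The proposition only claims a bound on $[G:C_G(x)]$ for each \emph{individual} $x\in\Gamma_r$, not on the global centraliser; your reduction to $C_G(\Gamma_r)$ throws away exactly the flexibility that makes the statement true.

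The paper's argument for (B) exploits this per-element nature directly. From (A) and $r\ge 2/\alpha$ one has $\mu(Y_r)\ge\alpha-1/r\ge\alpha/2\ge 1/\lceil 2/\alpha\rceil$. A short covering lemma (if $X\subset G$ is symmetric, contains $e$, and $\mu(X)\ge 1/m$, then $\langle X\rangle=X^{3m-1}$, proved by choosing $x_i\in X^{3i}\setminus X^{3i-1}$ and noting the translates $x_iX$ are disjoint) then gives $\Gamma_r=Y_r^{m}$ with $m=3\lceil 2/\alpha\rceil-1\le 6/\alpha+2$. Hence every $x\in\Gamma_r$ is a product $y_1\cdots y_m$ with each $y_j\in Y_r$, and the submultiplicativity of conjugacy-class sizes $k_G(uv)\le k_G(u)k_G(v)$ yields $[G:C_G(x)]=k_G(x)\le r^m\le r^{6/\alpha+2}$.
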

The following result then shows that this large subgroup has a small commutator subgroup.
\begin{theorem}[Guralnick--Mar\'oti {\cite[Theorem 1.9]{gm}}]\label{bfc}
Let $G$ be a group in which the size of every conjugacy class is at most $k>1$. Then $|[G,G]|\le k^{\frac{1}{2}(7+\log_2k)}$.
\end{theorem}

The first step of Neumann's original proof of \cref{neumann} is essentially the following observation.
\begin{lemma}\label{large.centraliser}
Let $G$ be an amenable group with finitely additive left-invariant mean $\mu$, let $\alpha\in(0,1]$, and suppose that $\pc_\mu(G)\ge\alpha$. Let $r>\frac{1}{\alpha}$ and write $X=\{x\in G:[G:C_G(x)]\le r\}$. Then $\mu(X)\ge\alpha-\frac{1}{r}$.
\end{lemma}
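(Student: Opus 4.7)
The plan is to mimic the proof of \cref{large.centraliser.detect}, but with the $\pi(\gamma) + \eps$ error term replaced by a clean $1/r$, using the fact that a left-invariant mean measures the index of any subgroup exactly.

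First I would record the elementary but crucial observation that if $H$ is a subgroup of $G$ and $\mu$ is any finitely additive left-invariant mean on $G$ then $\mu(H) = 1/[G:H]$, with our convention that $1/\infty = 0$. Indeed, if $[G:H] = m < \infty$ and $g_1, \ldots, g_m$ are coset representatives, then the sets $g_i H$ are disjoint, cover $G$, and all have the same $\mu$-measure by left-invariance, so by finite additivity each has measure $1/m$. If $[G:H] = \infty$ then for any $m \in \N$ we can pick $m$ representatives in distinct cosets and the same left-invariance argument gives $\mu(H) \le 1/m$, hence $\mu(H) = 0$.

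Next I would unwind the definition of $\pc_\mu(G)$ using the product-mean formula in the introduction to write
\[
\alpha \le \pc_\mu(G) = \int_{x \in G} \mu(C_G(x)) \diff\mu(x),
\]
splitting the right-hand integral as $\int_X \mu(C_G(x))\diff\mu(x) + \int_{G\setminus X} \mu(C_G(x))\diff\mu(x)$. On the first piece I would use the trivial bound $\mu(C_G(x)) \le 1$, yielding $\mu(X)$. On the second piece, for each $x \notin X$ we have $[G:C_G(x)] > r$, so by the observation above $\mu(C_G(x)) = 1/[G:C_G(x)] \le 1/r$, giving $\mu(G \setminus X)/r \le 1/r$. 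Combining these gives $\alpha \le \mu(X) + 1/r$, which rearranges to the desired conclusion.

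I do not anticipate a serious obstacle: the only subtle point is making sure that $x \mapsto \mu(C_G(x))$ is a genuine element of $\ell^\infty(G)$ so that the outer mean is defined (it takes values in $[0,1]$, so this is immediate), and that finite additivity suffices in the coset argument for $\mu(H) = 1/[G:H]$ (the finite-index case uses only finitely many cosets, and the infinite-index case uses only the upper-bound direction, again via finitely many cosets at a time).
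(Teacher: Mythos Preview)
Your proposal is correct and follows essentially the same approach as the paper: split $\pc_\mu(G)=\int_{X}\mu(C_G(x))\diff\mu(x)+\int_{G\setminus X}\mu(C_G(x))\diff\mu(x)$, bound the first integral by $\mu(X)$ and the second by $1/r$. The paper's proof is simply terser, omitting the explicit verification that $\mu(H)=1/[G:H]$ and the $\ell^\infty$ remark that you spell out.
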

\begin{proof}
We have
\begin{align*}
\pc_\mu(G)&=\int_{x\in X}\mu(C_G(x))\diff\mu(x)+\int_{x\in G\backslash X}\mu(C_G(x))\diff\mu(x)\\
    &\le\mu(X)+\textstyle\frac{1}{r}.
\end{align*}
\end{proof}

\begin{lemma}\label{neum.translates}
Let $G$ be an amenable group with finitely additive left-invariant mean $\mu$, let $X\subset G$ be a symmetric subset containing the identity, and let $m\in\N$. Suppose that
\begin{equation}\label{eq:neum.trans}
\mu(X)\ge\frac{1}{m}.
\end{equation}
Then $\langle X\rangle=X^{3m-1}$.
\end{lemma}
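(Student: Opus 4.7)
\textbf{Proof plan for \cref{neum.translates}.}
The plan is to show that $X^{3m-1}$ is already closed under right-multiplication by $X$, i.e.\ $X^{3m}=X^{3m-1}$. Once we have this, it follows by induction that $X^n=X^{3m-1}$ for every $n\ge 3m-1$ (using $X^{n+1}=X^n\cdot X=X^{3m-1}\cdot X=X^{3m}=X^{3m-1}$), and since $X$ is symmetric and contains the identity, $\langle X\rangle=\bigcup_{n\ge 1}X^n=X^{3m-1}$, as desired.

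To prove $X^{3m}=X^{3m-1}$, I will argue by contradiction: suppose there exists $g\in X^{3m}\setminus X^{3m-1}$, and fix a factorisation $g=x_1x_2\cdots x_{3m}$ with $x_i\in X$. Define the partial products $g_k=x_1\cdots x_k$ for $0\le k\le 3m$ (with $g_0=e$), and consider the $m+1$ left-translates
\[
g_0X,\ g_3X,\ g_6X,\ \ldots,\ g_{3m}X.
\]
By left-invariance of $\mu$ each of these has $\mu$-measure exactly $\mu(X)\ge 1/m$. If the sets were pairwise disjoint, finite additivity of $\mu$ together with $\mu(G)=1$ would force
\[
1\ge\mu\Bigl(\bigcup_{k=0}^{m}g_{3k}X\Bigr)=(m+1)\mu(X)\ge\frac{m+1}{m}>1,
\]
a contradiction. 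Hence there exist $0\le i<j\le m$ with $g_{3i}X\cap g_{3j}X\ne\varnothing$.

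Pick $x,y\in X$ with $g_{3i}x=g_{3j}y$; symmetry of $X$ then gives $g_{3i}^{-1}g_{3j}=xy^{-1}\in X^2$. But telescopically $g_{3i}^{-1}g_{3j}=x_{3i+1}\cdots x_{3j}$, so the block $x_{3i+1}\cdots x_{3j}$ lies in $X^2$. Substituting this back into $g=g_{3i}\cdot(x_{3i+1}\cdots x_{3j})\cdot(x_{3j+1}\cdots x_{3m})$ gives
\[
g\in X^{3i}\cdot X^2\cdot X^{3m-3j}=X^{3m+2-3(j-i)}\subseteq X^{3m-1},
\]
since $j-i\ge 1$. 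This contradicts $g\notin X^{3m-1}$, completing the proof.

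The whole argument is essentially a pigeonhole on cosets of $X$ under the invariant mean, in the spirit of the author's \cref{reps.in.bdd.ball} (where the same idea is applied to cosets of a subgroup). There is no genuine obstacle — the only mild subtlety is that the ``pigeonhole'' uses a finitely additive mean rather than a counting measure, but this is covered by left-invariance and finite additivity, both of which are available by hypothesis. The choice of spacing $3$ between the sampled partial products is dictated by the need for each shortening step to remove at least one full $X$-factor ($X^2$ replacing a block of length $3$), which is exactly what yields the sharp exponent $3m-1$.
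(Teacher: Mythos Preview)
Your proof is correct and follows essentially the same approach as the paper: both arguments show $X^{3m}=X^{3m-1}$ by producing $m+1$ left-translates of $X$, each of $\mu$-measure at least $1/m$, and deriving a contradiction from finite additivity and left-invariance. The only cosmetic difference is in how the translating elements are chosen: the paper greedily selects $x_i\in X^{3i}\setminus X^{3i-1}$ and observes that $x_iX\subset X^{3i+1}\setminus X^{3i-2}$, so the translates lie in disjoint annuli, whereas you fix a single $g\in X^{3m}\setminus X^{3m-1}$ and use its partial products $g_{3k}$, turning any collision into a shortening of $g$.
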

\begin{proof}
We follow Eberhard \cite[Lemma 2.1]{sean}, whose proof for the finite case streamlines part of Neumann's argument \cite{neumann}. Select for as long as possible a sequence $x_1,x_2,\ldots$ in $G$ such that $x_i\in X^{3i}\backslash X^{3i-1}$. If we have such a sequence $x_1,\ldots,x_j$ then $x_iX\subset X^{3i+1}\backslash X^{3i-2}$, and so the sets $X$ and $x_iX$ are all disjoint. The inequality \eqref{eq:neum.trans} and the left-invariance of $\mu$ therefore imply that $j\le m-1$. It follows that $X^{3m}=X^{3m-1}$, and hence that $\langle X\rangle=X^{3m-1}$, as required.
\end{proof}

\begin{proof}[Proof of \cref{indep.cosets}]
We essentially follow Neumann \cite[Theorem 1]{neumann}. \cref{large.centraliser} implies that $\mu(Y_r)\ge\alpha-\textstyle\frac{1}{r}$, which implies (A). Moreover, if $r\ge\frac{2}{\alpha}$ then by \cref{neum.translates} it also implies that
\begin{equation}\label{eq:Gamma.in.Y}
\Gamma_r=Y_r^{3\lceil\frac{2}{\alpha}\rceil-1}.
\end{equation}
Writing $k_G(x)$ for the size of the conjugacy class in $G$ of $x\in G$, and noting that $k_G(x)=[G:C_G(x)]$ and $k_G(xy)\le k_G(x)k_G(y)$ for every $x,y\in G$, conclusion (B) then follows from \eqref{eq:Gamma.in.Y} and the definition of $Y_r$.
\end{proof}

\section{Independence of degree of commutativity from the method of averaging}\label{sec:indep}
\begin{proof}[Proof of \cref{indep}]
If $\dc_M(G)>0$ for some sequence $M$ of probability measures that detects index uniformly then \cref{neumann.detect} and \cref{measures=>detects} imply that $G$ is virtually abelian, and in particular amenable, and it is then trivial that for every finitely additive left-invariant mean $\mu$ we have $\dc_\mu(G)>0$. We may therefore assume that $G$ is amenable with finitely additive left-invariant mean $\nu$ such that $\pc_\nu(G)>0$, say $\pc_\nu(G)=\alpha$. Let $r\in\N$ large enough to apply \cref{indep.cosets} (B) with the mean $\nu$, and define $\Gamma_r$ to be the group generated by $\{x\in G:[G:C_G(x)]\le r\}$ as in that proposition.

Let $\mu$ be an arbitrary finitely additive left-invariant mean on $G$, and let $M=(\mu_n)_{n=1}^\infty$ be an arbitrary sequence of probability measures on $G$ that measures index uniformly, and note that
\begin{equation}\label{eq:pc.split}
\pc_\mu(G)=\int_{x\in\Gamma_r}\mu(C_G(x))\diff\mu(x)+\int_{x\in G\backslash\Gamma_r}\mu(C_G(x))\diff\mu(x)
\end{equation}
and
\begin{equation}\label{eq:dc.split}
\pc_{\mu_n}(G)=\sum_{x\in\Gamma_r}\mu_n(C_G(x))\mu_n(x)+\sum_{x\in G\backslash\Gamma_r}\mu_n(C_G(x))\mu_n(x).
\end{equation}
The definition of $\Gamma_r$ implies that the centraliser of any given $x\in G\backslash\Gamma_r$ has index at least $r$, which implies that
\begin{equation}\label{eq:pc.rubbish}
\int_{x\in G\backslash\Gamma_r}\mu(C_G(x))\diff\mu(x)\le\frac{1}{r},
\end{equation}
and also, by definition of uniform measurement of index, that
\begin{equation}\label{eq:dc.rubbish}
\limsup_{n\to\infty}\sum_{x\in G\backslash\Gamma_r}\mu_n(C_G(x))\mu_n(x)\le\frac{1}{r}.
\end{equation}

\cref{indep.cosets} (B) implies that the centraliser of any given $x\in\Gamma_r$ has index at most $r^{6/\alpha+2}$. Since $G$ is finitely generated, \cref{fin.gen.fin.ind} therefore implies that we may enumerate as $C_1,\ldots,C_k$ the finite-index subgroups of $G$ arising as centralisers of elements of $\Gamma_r$. We may then partition $\Gamma_r$ into sets $A_1,\ldots,A_k$ defined by
\[
A_i=\{x\in\Gamma_r:C_G(x)=C_i\},
\]
giving
\begin{equation}\label{eq:int.indep.subgroups}
\int_{x\in\Gamma_r}\mu(C_G(x))\diff\mu(x)=\sum_{i=1}^k\mu(A_i)\mu(C_i)
\end{equation}
and
\begin{equation}\label{eq:Fol.indep.subgroups}
\sum_{x\in\Gamma_r}\mu_n(C_G(x))\mu_n(x)=\sum_{i=1}^k\mu_n(A_i)\mu_n(C_i)
\end{equation}
for every $n$.

However, for each $i$ we have
\[
A_i=C_{\Gamma_r}(C_i)\left\backslash\bigcup_{H\gneqq C_i}C_{\Gamma_r}(H)\right..
\]
Writing $Z$ for the set of subgroups $H\gneqq C_i$ such that $C_{\Gamma_r}(H)$ has infinite index in $G$ and $P$ for the set of subgroups $H\gneqq C_i$ such that $C_{\Gamma_r}(H)$ has finite index in $G$, and noting that each of $Z$ and $P$ is finite by \cref{fin.gen.fin.ind}, we therefore have
\[
A_i=\left(C_{\Gamma_r}(C_i)\left\backslash\bigcup_{H\in P}C_{\Gamma_r}(H)\right.\right)\left\backslash\bigcup_{H\in Z}C_{\Gamma_r}(H)\right.
\]
However, since $P$ is finite the union $\bigcup_{H\in P}C_{\Gamma_r}(H)$ is also the union of finitely many cosets of the finite-index subgroup $\bigcap_{H\in P}C_{\Gamma_r}(H)$, each of which is contained entirely within $C_{\Gamma_r}(C_i)$. This implies that the right-hand side of \eqref{eq:int.indep.subgroups} takes some value $\lambda_r$ that does not depend on the choice of $\mu$, and that the right-hand side of \eqref{eq:Fol.indep.subgroups} converges to $\lambda_r$ as $n\to\infty$ for every $M$ that measures index uniformly. Combined with \eqref{eq:pc.split} and \eqref{eq:pc.rubbish}, this implies that $\lambda_r\to\pc_\mu(G)$ as $r\to\infty$, whilst combined with \eqref{eq:dc.split} and \eqref{eq:dc.rubbish} it implies that $\pc_{\mu_n}(G)\to\dc_M(G)=\dc_\mu(G)$ as $n\to\infty$.
\end{proof}

\section{Algebraic consequences of a high degree of commutativity}\label{sec:proofs}
In this section we complete the proofs of our main results. We start with \cref{main:measures} (2) and \cref{main:mean} (2), as they are ingredients in the proofs of \cref{main:measures} (1) and \cref{main:mean} (1), respectively.

\begin{proof}[Proof of \cref{main:measures} (2) and \cref{main:mean} (2)]
Given $\delta>0$ define $X_\delta=\{x\in G:[G:C_G(x)]\le2/(1+2\delta)\}$, noting that since $2/(1+2\delta)<2$ the set $X_\delta$ is central. In the setting of \cref{main:mean} (2), \cref{large.centraliser} implies that for $\delta<\eps$ we have $\mu(X_\delta)\ge\eps-\delta$, and so letting $\delta\to0$ we see that $\mu(Z(G))\ge\eps$, and \cref{main:mean} (2) follows. In the setting of \cref{main:measures} (2), \cref{measures=>detects} and \cref{large.centraliser.detect} imply that for $\delta<\eps$ we have $\limsup_{n\to\infty}\mu_n(X_\delta)\ge\eps-\delta$, and so letting $\delta\to0$ we see that $\limsup_{n\to\infty}\mu_n(Z(G))\ge\eps$. \cref{main:measures} (2) then follows from the fact that the $\mu_n$ measure index uniformly.
\end{proof}

The conclusion of \cref{main:measures} (2) and \cref{main:mean} (2) allows us to give an explicit expression for the degree of commutativity, as follows.

\begin{prop}\label{v.central.implies.indep}
Let $G$ be a countable group with a central subgroup $Z$ of finite index, and let $T$ be a left transversal for $Z$ in $G$. Then if $\mu$ is a finitely additive left-invariant mean on $G$ then
\[
\pc_\mu(G)=\frac{1}{|G/Z|}\sum_{t\in T}\frac{1}{[G:C_G(t)]}.
\]
Moreover, if $M=(\mu_n)_{n=1}^\infty$ is a sequence of probability measures on $G$ that measure index uniformly then
\[
\dc_M(G)=\frac{1}{|G/Z|}\sum_{t\in T}\frac{1}{[G:C_G(t)]},
\]
and the $\limsup$ in the definition of $\dc_M(G)$ is actually a limit.
\end{prop}
\begin{proof}
Elements $x,y\in G$ commute if and only if $xz$ and $y$ commute for every $z\in Z$. This implies that
\begin{align*}
\pc_\mu(G)&=\sum_{t\in T}\mu(tZ)\mu(C_G(t))\\
     &=\frac{1}{|G/Z|}\sum_{t\in T}\frac{1}{[G:C_G(t)]},
\end{align*}
as required.
It also implies that
\begin{align*}
\dc_{\mu_n}(G)&=\sum_{t\in T}\mu_n(tZ)\mu_n(C_G(t))\\
     &\to\frac{1}{|G/Z|}\sum_{t\in T}\frac{1}{[G:C_G(t)]} &\text{(by uniform measurement of index)},
\end{align*}
as required.
\end{proof}

\begin{proof}[Proof of \cref{main:measures} (1) and \cref{main:mean} (1)]
In this proof we write $\dc(G)$ to mean $\dc_M(G)$ in the context of \cref{main:measures}, or $\dc_\mu(G)$ in the context of \cref{main:mean}. We may assume that $G$ is not abelian and that $\pc(G)\ge\frac{5}{8}$, and prove that $\pc(G)\le\frac{5}{8}$.

Let $T$ be a left transversal in $G$ of the centre $Z$ of $G$. It follows from \cref{main:measures} (2) or \cref{main:mean} (2), as appropriate, and from \cref{v.central.implies.indep}, that
\[
\pc(G)=\frac{1}{|G/Z|}\sum_{t\in T}\frac{1}{[G:C_G(t)]}.
\]
Since only one element of $T$ belongs to $Z$, and every other $t\in T$ has $[G:C_G(t)]\ge2$, we conclude that
\begin{align*}
\pc(G)&\le\frac{1}{|G/Z|}+\frac{|G/Z|-1}{2|G/Z|}\\
     &=\frac{1}{2}+\frac{1}{2|G/Z|}.
\end{align*}
Finally, we observe as Gustafson \cite{gustafson} did that since $G$ is not abelian, the quotient $G/Z$ is not cyclic, and hence $|G/Z|\ge4$. Substituting this into the last inequality then gives $\pc(G)\le\frac{5}{8}$, as required.
\end{proof}

\begin{proof}[Proof of \cref{main:mean} (3)]
We essentially reproduce Neumann's proof of \cref{neumann}. Let $r=\frac{1}{\alpha^2}+\frac{1}{\alpha}+1$, let $\Gamma$ be the group $\Gamma_r$ defined in \cref{indep.cosets} and set $H=[\Gamma,\Gamma]$, so that $\Gamma/H$ is abelian. Since the set $Y_r$ generating $\Gamma_r$ in \cref{indep.cosets} is invariant under conjugation by $G$, the subgroup $\Gamma$ is normal in $G$, and since $H$ is characteristic in $\Gamma$ it is normal in $G$.

Conclusion (A) of \cref{indep.cosets} implies that the index of $\Gamma$ in $G$ is less than $\alpha^{-1}+1$, and hence at most $\lceil\alpha^{-1}\rceil$, as required. On the other hand, conclusion (B) implies that the maximum size of a conjugacy class in $\Gamma$ is at most $(3\alpha^{-2})^{6/\alpha+2}$, and so \cref{bfc} implies that the cardinality of $H$ is at most $(2\alpha^{-1})^{O(\alpha^{-2}\log\alpha^{-1})}$, as required.
\end{proof}

\begin{proof}[Proof of \cref{main:measures} (3)]
Since $G$ is finitely generated, \cref{neumann.detect} and \cref{measures=>detects} imply that $G$ is virtually abelian, and in particular amenable. It follows that $G$ has a finitely additive left-invariant mean $\mu$, and then \cref{indep} implies that $\dc_\mu(G)=\alpha$. The desired result therefore follows from \cref{main:mean} (3).
\end{proof}

\begin{remark}\label{rem:mean}
In the specific case where $M$ is a sequence of almost-invariant measures, one can prove \cref{main:measures} (3) directly by a similar proof to \cref{main:mean} (3), without using \cref{indep}. However, it does not appear that such an argument is possible when $M$ is assumed more generally to be a sequence of measures that measures index uniformly. This is due to the use of  \cref{indep.cosets} in the proof of \cref{main:mean} (3), and in particular the use of \cref{neum.translates} in the proof of \cref{indep.cosets}. More precisely, \cref{neum.translates} uses the invariance of the invariant mean $\mu$, and whilst the almost-invariance of a sequence of almost-invariant measures is enough to prove a version of it, for more general sequences of measures that measure index uniformly there does not appear to be any reason to expect it to hold. 

One reason one might be interested in proving \cref{main:measures} (3) directly for almost-invariant measures is that the existence of an invariant mean on a virtually abelian group requires the axiom of choice; one could avoid this in the proof of \cref{main:measures} (3) by replacing \cref{main:mean} (3) with the special case of \cref{main:measures} (3) for almost-invariant measures. We leave it to the interested reader to prove \cref{main:measures} (3) in this way.
\end{remark}

\begin{proof}[Proof of \cref{neum.converse}]
Since $G$ is amenable, \cref{indep} implies that it is sufficient to prove that $\pc_\mu(G)\ge\frac{1}{m^2d}$ for every finitely additive left-invariant mean $\mu$.

Fix $x\in\Gamma$. The fact that $\Gamma/H$ is abelian implies that $[x,y]\subset H$ for every $y\in\Gamma$. Enumerating the elements of $H$ as $c_1,\ldots,c_d$, we may therefore partition $\Gamma$ into sets $\Gamma_1,\ldots,\Gamma_d$ defined by
\[
\Gamma_i=\{y\in\Gamma:[x,y^{-1}]=c_i\}.
\]
Applying a technique that was used in \cite{nfdl}, for each $i$ with $\Gamma_i\ne\varnothing$ we fix some arbitrary $y_i\in\Gamma_i$ and note that $y_i^{-1}y\in C_G(x)$ for every $y\in\Gamma_i$. In particular,
\begin{equation}\label{eq:partition.central}
y_i^{-1}\Gamma_i\subset C_G(x)
\end{equation}
for every such $i$. Finite additivity and the pigeonhole principle imply that there exists some $i$ with
\begin{align*}
\mu(\Gamma_i)&\ge\frac{1}{d}\mu(\Gamma)\\
    &=\frac{1}{md},
\end{align*}
and then \eqref{eq:partition.central} and left invariance imply that $\mu(C_G(x))\ge\frac{1}{md}$. Since this holds for every $x\in\Gamma$, the proposition is proved.
\end{proof}

\begin{proof}[Proof of \cref{indep.from.mu.and.F}]
Case (1) follows from \cref{main:mean} (3) and \cref{neum.converse}. Case (2) follows from \cref{main:measures} (2), \cref{main:mean} (2) and \cref{v.central.implies.indep}.
\end{proof}

\section{An application to conjugacy ratios}\label{sec:cr}
Given a finitely generated group $G$ with finite symmetric generating set $S$ containing the identity, and writing $\mathcal{C}(G)$ for the set of conjugacy classes of $G$, Cox \cite{cox} introduces the \emph{conjugacy ratio} $\crat_S(G)$ of $G$ with respect to $S$ via
\begin{equation}\label{eq:crat}
\crat_S(G)=\limsup_{n\to\infty}\frac{|\{C\in\mathcal{C}(G):C\cap S^n\ne\varnothing\}|}{|S^n|}.
\end{equation}
For notational convenience, in the present paper we also define a $\liminf$ version,
\[
\crat^\ast_S(G)=\liminf_{n\to\infty}\frac{|\{C\in\mathcal{C}(G):C\cap S^n\ne\varnothing\}|}{|S^n|}.
\]
It is well known that in a finite group the conjugacy ratio coincides with the degree of commutativity; Cox asks whether this remains true for infinite groups \cite[Question 2]{cox}, and also whether $\crat_S(G)>0$ if and only if $G$ is virtually abelian \cite[Question 1]{cox}. Positive answers to \cref{conj:amv} and the first of these questions would of course imply a postive answer to the second.

Ciobanu, Cox and Martino conjecture explicitly that $\crat_S(G)>0$ if and only if $G$ is virtually abelian  \cite[Conjecture 1.1]{laura}, and prove it for residually finite groups satisfying \eqref{eq:subexp.growth} \cite[Theorem 3.7]{laura}, as well as for hyperbolic groups, lamplighter groups and right-angled Artin groups \cite[\S4]{laura}. In this section we answer both of Cox's questions in the affirmative in the case where $S$ satisfies \eqref{eq:subexp.growth}; in particular, this removes the need for the residually finite hypothesis from the result of Ciobanu, Cox and Martino.

Before we state our results precisely we need some further notation. A sequence $(F_n)_{n=1}^\infty$ of finite subsets of $G$ is said to be a \emph{right-F\o lner sequence} if
\[
\frac{|F_nx\triangle F_n|}{|F_n|}\to0
\]
as $n\to\infty$ for every $x\in G$. We have already noted that if $S$ satisfies \eqref{eq:subexp.growth} then $(S^n)_{n=1}^\infty$ is a left-F\o lner sequence; by symmetry it is also a right-F\o lner sequence.

By analogy with \eqref{eq:crat}, we define the \emph{conjugacy ratio} $\crat_F(G)$ of $G$ with respect to a F\o lner sequence $F=(F_n)_{n=1}^\infty$ of finite subsets of $G$ via
\[
\crat_F(G)=\limsup_{n\to\infty}\frac{|\{C\in\mathcal{C}(G):C\cap F_n\ne\varnothing\}|}{|F_n|}.
\]
We also abuse notation slightly and identify $F$ with the sequence $(\mu_n)_{n=1}^\infty$ of uniform probability measures on the sets $F_n$, so that the notation $\dc_F(G)$ makes sense.

It is an easy exercise to check that in a finite group the conjugacy ratio is always equal to the degree of commutativity. Here we show that this remains true in an infinite group when both are defined with respect to a simultaneous left- and right-F\o lner sequence.
\begin{theorem}\label{cg=dc}
Let $G$ be a finitely generated amenable group, and let $F=(F_n)_{n=1}^\infty$ be both a left- and right-F\o lner sequence for $G$. Then $\crat_F(G)=\dc_F(G)$.
\end{theorem}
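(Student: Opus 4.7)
The plan is to express both quantities as limits of explicit sums over conjugacy classes and reduce the difference to controllable error terms. Let $\mu_n$ be the uniform probability measure on $F_n$. By \cref{folner.uniform}, $(\mu_n)$ measures index uniformly, so $\mu_n(C_G(x))\to 1/|C_x|$ (with the convention $1/\infty:=0$) uniformly in $x$, where $C_x$ denotes the $G$-conjugacy class of $x$. Fubini then gives
\[
\dc_F(G)=\lim_{n\to\infty}\frac{1}{|F_n|}\sum_{x\in F_n}\mu_n(C_G(x))=\lim_{n\to\infty}\frac{1}{|F_n|}\sum_{\substack{C\in\mathcal{C}(G)\\|C|<\infty}}\frac{|C\cap F_n|}{|C|},
\]
while $\crat_F(G)\cdot|F_n|=\sum_{x\in F_n}1/|C_x\cap F_n|$ since each class $C$ with $C\cap F_n\ne\varnothing$ contributes $\sum_{x\in C\cap F_n}1/|C\cap F_n|=1$. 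Up to the uniform error from approximating $\mu_n(C_G(x))$ by $1/|C_x|$, the nonnegative difference $(\crat_F-\dc_F)(G)\cdot|F_n|$ decomposes as
\[
\mathrm{(a)}\;\sum_{\substack{|C|<\infty\\C\cap F_n\ne\varnothing}}\frac{|C\setminus F_n|}{|C|}\;+\;\mathrm{(b)}\;\bigl|\{C\in\mathcal{C}(G):|C|=\infty,\,C\cap F_n\ne\varnothing\}\bigr|,
\]
so it suffices to show each of (a) and (b) is $o(|F_n|)$.

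For (a), fix a cutoff $K$ and split by class size. Since $G$ is finitely generated it has only finitely many subgroups of any given index $k\le K$, hence only finitely many subgroups can serve as centralizers of elements in classes of size $\le K$. Fix a finite transversal $T_H\subset G$ for each such subgroup $H$ once and for all; the bi-F\o lner property $|F_n\triangle g^{-1}F_ng|=o(|F_n|)$ applied to each $g$ in any $T_H$ then bounds the number of elements of $F_n$ that lie in a straddling class of size $\le K$ by a finite sum of $o(|F_n|)$ terms, giving $o_K(|F_n|)$. Classes of size $>K$ contribute at most $|F_n|/K$ to (a) trivially, and letting $K\to\infty$ after $n\to\infty$ yields $\mathrm{(a)}=o(|F_n|)$.

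Part (b) is the main difficulty. The key reduction is $\sum_{x\in F_n,\,|C_x|=\infty}1/|C_x\cap F_n|\le|F_n|/K+|\{x\in F_n:|C_x|=\infty,\,|C_x\cap F_n|<K\}|$, so for each fixed $K$ it suffices to bound the latter set by $o(|F_n|)$. I plan to do this by constructing a finite set $T=\{t_1,\ldots,t_K\}\subset G$ with the property that $t_it_j^{-1}$ has infinite $G$-conjugacy class for all $i\ne j$. Then by \cref{folner.uniform} each $C_G(t_it_j^{-1})$ has $\mu_n$-measure tending to $0$, and by the bi-F\o lner property the set $\{x\in F_n:t_ixt_i^{-1}\notin F_n\text{ for some }i\}$ has $\mu_n$-measure tending to $0$; for $x$ outside both bad sets the elements $\{t_ixt_i^{-1}\}_{i=1}^K$ are pairwise distinct (since $t_j^{-1}t_i\notin C_G(x)$ for $i\ne j$) and all lie in $F_n$, giving $|C_x\cap F_n|\ge K$. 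The set $T$ is built inductively, choosing each $t_{k+1}$ outside the finitely many cosets $\mathrm{FC}(G)\cdot t_i$ containing previous choices, which is possible provided $[G:\mathrm{FC}(G)]>K$ and in particular whenever $[G:\mathrm{FC}(G)]=\infty$. The principal obstacle is the remaining case $[G:\mathrm{FC}(G)]<\infty$, where the inductive construction caps at $[G:\mathrm{FC}(G)]$; the expected resolution is that in a finitely generated virtually-FC group (which $G$ is in this case) there are only finitely many infinite $G$-conjugacy classes---a consequence of B.~H.~Neumann's theorem that a finitely generated FC-group has finite commutator subgroup applied to $\mathrm{FC}(G)$---so (b) is bounded uniformly in $n$ and hence trivially $o(|F_n|)$.
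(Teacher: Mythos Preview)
Your decomposition into (a) and (b) is correct and the plan for (a) with small classes and for (b) with $[G:\mathrm{FC}(G)]=\infty$ is sound, but there are two genuine gaps.

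First, the step ``classes of size $>K$ contribute at most $|F_n|/K$ to (a) trivially'' is not justified. Each such class contributes $|C\setminus F_n|/|C|\le1$, and there is no trivial reason why the \emph{number} of such classes intersecting $F_n$ should be $\le|F_n|/K$; a class with $|C|>K$ may have $|C\cap F_n|=1$. (You may be thinking of the bound $\sum_{|C|>K}|C\cap F_n|/|C|\le|F_n|/K$, but that is the $\dc$-side contribution, not the difference.) This could be repaired by reusing your (b)-style argument to control classes with $|C\cap F_n|<K$.

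More seriously, your ``expected resolution'' of the case $[G:\mathrm{FC}(G)]<\infty$ is false: a finitely generated group with $\mathrm{FC}(G)$ of finite index can have infinitely many infinite conjugacy classes. Take $G=\Z^2\rtimes C_2$ with the involution acting by $(a,b)\mapsto(a,-b)$. Then $\mathrm{FC}(G)=\Z^2$ has index $2$ and $[\mathrm{FC}(G),\mathrm{FC}(G)]=1$, yet the conjugacy class of $((a,c),1)$ is $\{((a,c'),1):c'\equiv c\pmod 2\}$, so there is one infinite class for each pair $(a,c\bmod 2)\in\Z\times\{0,1\}$. B.~H.~Neumann's theorem tells you nothing about $G$-classes of elements outside $\mathrm{FC}(G)$.

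The paper avoids all of this with a single short observation (\cref{contain.most.classes}): if a class $C$ meets $F_n$ but is not contained in $F_n$, then, taking $X=C\cap F_n$ and using that $S^{-1}XS\cap C\supsetneq X$, the class must meet $S^{-1}F_nS\setminus F_n$. Since $F$ is bi-F\o lner, $|S^{-1}F_nS\setminus F_n|=o(|F_n|)$, so there are $o(|F_n|)$ straddling classes of any size, finite or infinite. This one line replaces both your (a) and (b) analyses and requires no case distinction on $\mathrm{FC}(G)$.
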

In particular, combined with Theorems \ref{main:measures} and \ref{folner.uniform} this implies the following.
\begin{corollary}\label{cg0.iff.dc0}
Let $G$ be a finitely generated amenable group, and let $F=(F_n)_{n=1}^\infty$ be both a left- and right-F\o lner sequence for $G$. Then the following hold.
\begin{enumerate}
\item If $\crat_F(G)>\frac{5}{8}$ then $G$ is abelian.
\item If $\crat_F(G)\ge\frac{1}{2}+\eps$ for some $\eps>0$ then the centre of $G$ has index at most $\frac{1}{\eps}$ in $G$.
\item If $\crat_F(G)\ge\alpha>0$ then $G$ has a normal subgroup $\Gamma$ of index at most $\lceil\alpha^{-1}\rceil$ and a normal subgroup $H$ of cardinality at most $(2\alpha^{-1})^{O(\alpha^{-2}\log\alpha^{-1})}$ such that $H\subset\Gamma$ and $\Gamma/H$ is abelian.
\end{enumerate}
\end{corollary}
\begin{remark*}
It is well known that every finitely generated amenable group admits sequences that are simultaneously left- and right-F\o lner, and so \cref{cg=dc} and \cref{cg0.iff.dc0} have content for every finitely generated amenable group.
\end{remark*}
If $S$ is a finite symmetric generating set for $G$ containing the identity and satisfying \eqref{eq:subexp.growth} then $(S^n)_{n=1}^\infty$ is both a left- and right-F\o lner sequence, and so Corollary \ref{cg0.iff.dc0} is a generalisation and strengthening of Ciobanu, Cox and Martino's result \cite[Theorem 3.7]{laura}. Moreover, if $G$ is a finitely generated group of subexponential growth and $S$ is a finite symmetric generating set containing the identity then it is well known and easy to check that some subsequence of $(S^n)_{n=1}^\infty$ is both a left- and right-F\o lner sequence, and so \cref{cg0.iff.dc0} also implies the following.
\begin{corollary}
Let $G$ be a finitely generated group of subexponential growth, and let $S$ be a finite symmetric generating set containing the identity. Then the following hold.
\begin{enumerate}
\item If $\crat^\ast_S(G)>\frac{5}{8}$ then $G$ is abelian.
\item If $\crat^\ast_S(G)\ge\frac{1}{2}+\eps$ for some $\eps>0$ then the centre of $G$ has index at most $\frac{1}{\eps}$ in $G$.
\item If $\crat^\ast_S(G)\ge\alpha>0$ then $G$ has a normal subgroup $\Gamma$ of index at most $\lceil\alpha^{-1}\rceil$ and a normal subgroup $H$ of cardinality at most $(2\alpha^{-1})^{O(\alpha^{-2}\log\alpha^{-1})}$ such that $H\subset\Gamma$ and $\Gamma/H$ is abelian.
\end{enumerate}
\end{corollary}

An important ingredient in the proof of \cref{cg=dc} is the following reformulation of the definition of the conjugacy ratio with respect to a left- and right-F\o lner sequence.
\begin{prop}\label{contain.most.classes}
If $F$ is both left- and right-F\o lner then 
\[
\frac{|\{C\in\mathcal{C}(G):C\cap F_n\ne\varnothing\text{ and }C\not\subset F_n\}|}{|F_n|}\to 0.
\]
In particular,
\[
\crat_F(G)=\limsup_{n\to\infty}\frac{|\{C\in\mathcal{C}(G):C\subset F_n\}|}{|F_n|}.
\]
\end{prop}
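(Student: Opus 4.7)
The plan is to exploit the finite generation of $G$ (which is assumed throughout \cref{sec:cr} via \cref{cg=dc}) to produce, for each ``bad'' conjugacy class $C$—meaning $C\cap F_n\ne\varnothing$ and $C\not\subset F_n$—a witness of the form $(z,s)\in F_n\times S$ with $szs^{-1}\notin F_n$, where $S$ is a fixed finite symmetric generating set. For each bad $C$, I would pick $x\in C\cap F_n$ and $y\in C\setminus F_n$, write $y=gxg^{-1}$ with $g=s_k\cdots s_1$ in the generators, and walk along the conjugation path $x_0=x$, $x_i=s_ix_{i-1}s_i^{-1}$. Since $x_0\in F_n$ and $x_k=y\notin F_n$, some step must cross the boundary, giving an index $i$ with $x_{i-1}\in F_n$ and $s_ix_{i-1}s_i^{-1}\notin F_n$; this yields the witness $(z(C),s(C))=(x_{i-1},s_i)$.

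The next step is to observe that the resulting map $C\mapsto(z(C),s(C))$ is injective from the set of bad classes into $F_n\times S$, because $z(C)\in C$ already determines the conjugacy class. Consequently,
\[
|\{C:C\cap F_n\ne\varnothing,\ C\not\subset F_n\}|\le\sum_{s\in S}|\{z\in F_n:szs^{-1}\notin F_n\}|=\sum_{s\in S}|F_n\setminus s^{-1}F_ns|.
\]

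Finally, I would deduce from the bi-F\o lner hypothesis that each term on the right is $o(|F_n|)$. For each fixed $s\in S$, the triangle inequality
\[
|F_n\triangle s^{-1}F_ns|\le|F_n\triangle s^{-1}F_n|+|s^{-1}F_n\triangle s^{-1}F_ns|=|sF_n\triangle F_n|+|F_n\triangle F_ns|
\]
together with the left-F\o lner and right-F\o lner conditions shows $|F_n\triangle s^{-1}F_ns|/|F_n|\to0$. Since $|S|$ is finite, summing gives the required $o(|F_n|)$ bound, proving the first claim. The ``in particular'' statement is then immediate, by writing $\{C:C\cap F_n\ne\varnothing\}$ as the disjoint union of $\{C:C\subset F_n\}$ and the bad set, dividing by $|F_n|$, and taking $\limsup$.

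I do not anticipate a serious obstacle: the substance of the argument is the boundary-crossing trick—which reduces a conjugation-by-arbitrary-$g$ question to a conjugation-by-a-single-generator question—together with the (standard) observation that the first coordinate of the witness suffices to pin down $C$, so no double-counting issue arises. The mild subtlety is keeping the use of finite generation explicit, which is only needed to decompose $g$ into generators.
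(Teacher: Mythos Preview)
Your argument is correct and is essentially the paper's proof: both use finite generation to find, for each bad class $C$, an element $z\in C\cap F_n$ and a generator $s\in S$ with $szs^{-1}\notin F_n$ (the paper packages this step as the observation that a nonempty $X\subset C$ with $S^{-1}XS\cap C=X$ must equal $C$, which is exactly your path-crossing trick in contrapositive form), and then bound the number of such witnesses using the bi-F\o lner condition. The only cosmetic difference is that the paper counts via $|S^{-1}F_nS\setminus F_n|$ rather than your sum $\sum_{s\in S}|F_n\setminus s^{-1}F_ns|$.
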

\begin{proof}
Let $S$ be a finite generating set for $G$. The fact that $F$ is both left- and right-F\o lner implies that
\[
\frac{|S^{-1}F_nS\backslash F_n|}{|F_n|}\to0,
\]
and hence in particular that
\[
\frac{|\{C\in\mathcal{C}(G):C\cap(S^{-1}F_nS\backslash F_n)\ne\varnothing\}|}{|F_n|}\to0.
\]
Note, however, that if $X$ is any subset of a conjugacy class $C$ such that $S^{-1}XS\cap C=X$ then in fact $X=C$, which implies in particular that
\[
\{C\in\mathcal{C}(G):C\cap F_n\ne\varnothing\text{ and }C\not\subset F_n\}\subset\{C\in\mathcal{C}(G):C\cap(S^{-1}F_nS\backslash F_n)\ne\varnothing\},
\]
and so the result follows.
\end{proof}

\begin{proof}[Proof of \cref{cg=dc}]
Writing
\[
\eps_n(x)=\frac{|C_G(x)\cap F_n|}{|F_n|}-\frac{1}{[G:C_G(x)]},
\]
we have
\[
\dc(F_n)=\frac{1}{|F_n|}\sum_{C\in\mathcal{C}(G):C\cap F_n\ne\varnothing}
                          \left(\sum_{x\in C\cap F_n}\frac{1}{[G:C_G(x)]}\right)+\frac{1}{|F_n|}\sum_{x\in F_n}\eps_n(x),
\]
\cref{folner.uniform} implies that $\eps_n(x)\to0$ uniformly in $x$ as $n\to\infty$, and so this implies that
\begin{equation}\label{eq:dc.with.ccs}
\dc(F_n)=\frac{1}{|F_n|}\sum_{C\in\mathcal{C}(G):C\cap F_n\ne\varnothing}\left(\sum_{x\in C\cap F_n}\frac{1}{[G:C_G(x)]}\right)+o(1).
\end{equation}
The index of $C_G(x)$ is equal to the cardinality of the conjugacy class of $x$, and so
\[
\sum_{x\in C\cap F_n}\frac{1}{[G:C_G(x)]}\le1
\]
for every $C\in\mathcal{C}(G)$, which with \eqref{eq:dc.with.ccs} implies that $\dc_F(G)\le\crat_F(G)$.

On the other hand, \eqref{eq:dc.with.ccs} also implies that
\begin{align*}
\dc_F(G)&\ge\limsup_{n\to\infty}\frac{1}{|F_n|}\sum_{C\in\mathcal{C}(G):C\subset F_n}\left(\sum_{x\in C}\frac{1}{[G:C_G(x)]}\right)\\
      &=\limsup_{n\to\infty}\frac{1}{|F_n|}\sum_{C\in\mathcal{C}(G):C\subset F_n}\left(\sum_{x\in C}\frac{1}{|C|}\right)\\
      &=\limsup_{n\to\infty}\frac{|\{C\in\mathcal{C}(G):C\subset F_n\}|}{|F_n|}\\
     &=\crat_F(G),
\end{align*}
the last equality being by \cref{contain.most.classes}.
\end{proof}

\cref{neum.converse} and \cref{cg=dc} combine to give a converse to \cref{cg0.iff.dc0} when $F$ is both a left- and right-F\o lner sequence. However, it turns out to be extremely straightforward to prove this converse directly for just left-F\o lner sequences, as follows.
\begin{prop}[converse to \cref{cg0.iff.dc0}]
Let $G$ be a group with a subgroup $\Gamma$ of index $m$ and a subgroup $H\lhd\Gamma$ of cardinality  $d$ such that $\Gamma/H$ is abelian, and let $F=(F_n)_{n=1}^\infty$ be a left-F\o lner sequence for $G$. Then $\crat_F(G)\ge\frac{1}{m^2d}$.
\end{prop}
\begin{remark*}
In particular, since a finitely generated finite-by-abelian group $G$ is virtually abelian, if $F$ is both a left- and right-F\o lner sequence then $\crat_F(G)>0$ if and only if $G$ is virtually abelian.
\end{remark*}
\begin{proof}
Let $x\in\Gamma$. Since $[\Gamma,\Gamma]\subset H$, for every $y\in\Gamma$ we have $y^{-1}xy\in xH$. Fixing a right transversal $T$ for $\Gamma$ in $G$, for every $t\in T$ and every $y\in\Gamma$ we therefore have $t^{-1}y^{-1}xyt\in t^{-1}xHt$. The conjugacy class of $x$ in $G$ therefore has size at most $|T||H|=md$. Since this is true for every $x\in H$, we conclude that
\[
|\{C\in\mathcal{C}(G):C\cap F_n\ne\varnothing\}|\ge\frac{|\Gamma\cap F_n|}{md}.
\]
Since \cref{folner.uniform} implies that
\[
\frac{|\Gamma\cap F_n|}{|F_n|}\to\frac{1}{m},
\]
the result follows.
\end{proof}

\end{document}